\newtheorem{thm}{Theorem}[section]
\newtheorem{corollary}[thm]{Corollary}
\newtheorem{lemma}[thm]{Lemma}
\newtheorem{introthm}{Theorem}
\newtheorem*{thm*}{Theorem}
\newcounter{claim}[thm] 
\renewcommand{\theintrothm}{\Alph{introthm}}
\theoremstyle{definition}
\newtheorem{remark}[thm]{Remark}
\crefname{introthm}{Theorem}{Theorems}
\newtheorem{example}[thm]{Example}
\renewcommand{\epsilon}{\varepsilon}
\newcommand{\ce}[2]{C_{#1}(#2)}
\newcommand{\nin}{\notin}
\newcommand{\norm}{\trianglelefteq}
\DeclareMathOperator{\Aut}{Aut}
\DeclareMathOperator{\cs}{cs}
\DeclareMathOperator{\soc}{soc}
\DeclareMathOperator{\Syl}{Syl}
\DeclareMathOperator{\CO}{CO}
\DeclareMathOperator{\GL}{GL}
\DeclareMathOperator{\PGammaU}{P\Gamma U}
\DeclareMathOperator{\PGL}{PGL}
\DeclareMathOperator{\PGO}{PGO}
\DeclareMathOperator{\PGSp}{PGSp}
\DeclareMathOperator{\PGU}{PGU}
\DeclareMathOperator{\POmega}{P\Omega}
\DeclareMathOperator{\PSL}{PSL}
\DeclareMathOperator{\PSp}{PSp}
\DeclareMathOperator{\PSU}{PSU}
\DeclareMathOperator{\Sp}{Sp}
\DeclareMathOperator{\SO}{SO}
\DeclareMathOperator{\D}{D}
\DeclareMathOperator{\G}{G}
\DeclareMathOperator{\F}{F}
\DeclarePairedDelimiter{\abs}{\lvert}{\rvert}
\DeclarePairedDelimiter{\gen}{\langle}{\rangle}
\title{Groups with conjugacy classes of coprime sizes}
\author{R.D. Camina}
\address{Fitzwilliam College, Cambridge, CB3 0DG, United Kingdom.}
\email{rdc26@cam.ac.uk}
\author{A. Maróti}
\address{Hun-Ren Alfr\'ed R\'enyi Institute of Mathematics, Hungarian Academy of Sciences, Re\'altanoda utca 13-15, H-1053, Budapest, Hungary.}
\email{maroti.attila@renyi.hu}
\author{E. Pacifici}
\address{Dipartimento di Matematica e Informatica ``U. Dini" (DiMaI), Universit\`a degli Studi di Firenze, Viale Morgagni 67/a, 50134 Firenze, Italy.}
\email{emanuele.pacifici@unifi.it}
\author{C. Parker}
\address{School of Mathematics, University of Birmingham, Birmingham, B15 2TT, United Kingdom}
\email{c.w.parker@bham.ac.uk}
\author{K. Rekvényi}
\address{Department of Mathematics, University of Manchester, Manchester, M13 9PL, United Kingdom. Also affiliated with: Heilbronn Institute for Mathematical Research, Bristol, BS8 1UG, United Kingdom.}
\email{kamilla.rekvenyi@manchester.ac.uk}
\author{J. Saunders}
\address{School of Mathematics, University of Bristol, Bristol, BS8 1UG, United Kingdom. Also affiliated with: Heilbronn Institute for Mathematical Research, Bristol, BS8 1UG, United Kingdom.}
\email{J.P.Saunders@bristol.ac.uk}
\author{V. Sotomayor}
\address{Departamento de Álgebra, Universidad de Granada, Avda. de Fuentenueva s/n, 18071 Granada, Spain.}
\email{vsotomayor@ugr.es}
\author{G. Tracey}
\address{Mathematics Institute, University of Warwick, Coventry CV4 7AL, United Kingdom}
\email{Gareth.Tracey@warwick.ac.uk}
\author{M. van Beek}
\address{Department of Mathematics, University of Manchester, Manchester, M13 9PL, United Kingdom. Also affiliated with: Heilbronn Institute for Mathematical Research, Bristol, BS8 1UG, United Kingdom.}
\email{martin.vanbeek@manchester.ac.uk}
\date{\today}
\begin{document}

\begin{abstract}    
    Suppose that \(x\), \(y\) are elements of a finite group \(G\) lying in conjugacy classes of coprime sizes. We prove that \(\gen{x^G} \cap \gen{y^G}\) is an abelian normal subgroup of \(G\) and, as a consequence, that if \(x\) and \(y\) are \(\pi\)-regular elements for some set of primes \(\pi\), then \(x^G y^G\) is a \(\pi\)-regular conjugacy class in \(G\). The latter statement was previously known for \(\pi\)-separable groups \(G\) and this generalisation permits us to extend several results concerning the \emph{common divisor graph} on $p$-regular conjugacy classes, for some prime $p$.
    \medskip
    
    \noindent \textbf{Keywords} finite groups, conjugacy classes, $\pi$-regular elements, common divisor graphs
    
    \smallskip
    
    \noindent \textbf{2020 MSC} 20E45 
\end{abstract}

\maketitle

%%%%%%%%%%%%%%%%%%%%%%%%%%%%%%%%%%%%%%%%%%%%%%%%%%%%%%%%%%%%%%%%%%%%%%%%%%%%%%%%%%%%%%%%%%%%%%%%

\section{Introduction} 

Let $G$ be a finite group, $H \le G$ and $x \in G$. Then $x^H=\{x^h\mid h\in H\}$ is the set of $H$-conjugates of $x$ and $x^G$ is the conjugacy class of $x$.  Analysis of the set \(\cs(G)\) of the conjugacy class sizes of $G$ is a well-established research theme as it can reveal structural information about the group $G$. In this article we consider the case where \(G\) has a pair of non-trivial conjugacy classes of coprime sizes. Our original motivation is outlined below, but a key step in reaching our goal was the following theorem, which we anticipate is of independent interest.

\begin{introthm}   \label{AlmostSimple}
    Suppose that \(G\) is a finite almost simple group. Then \(G\) does not have two non-trivial conjugacy classes of coprime sizes.
\end{introthm}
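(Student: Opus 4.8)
The plan is to invoke the classification of finite simple groups and treat the possible socles $S=\soc(G)$ in the three standard families (alternating, Lie type, sporadic). Since $G$ is almost simple we have $Z(G)=1$, and writing $|x^G|=|G:C_G(x)|$ the hypothesis that two non-trivial classes $x^G,y^G$ have coprime sizes is equivalent to a \emph{covering condition}: for every prime $r\mid|G|$, at least one of $C_G(x),C_G(y)$ contains a full Sylow $r$-subgroup of $G$. Concretely, putting $\pi_x=\{r\in\pi(G): r\nmid|x^G|\}$ and $\pi_y$ analogously, coprimality of the two sizes is equivalent to $\pi_x\cup\pi_y=\pi(G)$, while $\pi_x=\pi(G)$ forces $|x^G|$ coprime to $|G|$, hence $x\in Z(G)=1$. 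I would therefore argue by contradiction, assuming $x,y\neq1$ exist, and show that the sets $\pi_x,\pi_y$ cannot together exhaust $\pi(G)$.

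The groups of Lie type are the crux, and here the defining characteristic $p$ organises the argument. The key reduction is that the \emph{only} non-trivial classes whose size is coprime to $p$ are those of long-root elements: indeed $C_G(x)$ contains a full Sylow $p$-subgroup $U$ (a maximal unipotent subgroup) if and only if, up to conjugacy, $x$ lies in $Z(U)$, whereas a regular semisimple element has as centraliser a maximal torus, which is a $p'$-group, and any element with a non-central unipotent part has $|C_G(x)|_p<|U|$. Consequently, if neither class size is divisible by $p$ then both $x$ and $y$ are long-root elements, while if (say) $p\mid|x^G|$ and $p\mid|y^G|$ the sizes already share $p$; so it suffices to treat the case where $x$ is a long-root element and $y$ is arbitrary. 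For such an $x$ the centraliser $C_G(x)$ is an explicit subgroup of parabolic type, and $\pi(|x^G|)$ is a product of cyclotomic factors $q^d-1$ coprime to $p$, so $\pi_x^{\,c}$ contains $2$ together with primitive prime divisors of several distinct $q^d-1$. I would then show that no single non-trivial $y$ can cover all of these: forcing $C_G(y)$ to contain Sylow subgroups for primitive prime divisors of several $q^d-1$ simultaneously pushes $y$ into being regular in a large torus, whence $p\mid|y^G|$ and $2\mid|y^G|$ fail to be covered, a contradiction.

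The alternating groups $A_n$ ($n\ge5$) I would dispatch by a direct combinatorial analysis of cycle types: for $\sigma\neq1$ the size $|\sigma^G|$ is read off from its cycle structure, and one checks that the primes $2$ and $3$, together with a prime near $n$ supplied by Bertrand's postulate, force any two non-trivial class sizes to share a divisor, with the finitely many small degrees verified by hand. The sporadic groups and the Tits group are a finite computation: the known class-size data (e.g.\ the \textsc{Atlas}) confirm directly that no two non-trivial class sizes are coprime.

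The main obstacle will be the uniform treatment of the Lie type case, and especially the low-rank and small-field groups — $\PSL_2(q)$, $\PSU_3(q)$, the Suzuki and Ree families, and the small exceptional groups — where the generic description of $C_G(U)$ and of torus orders degenerates and genuine numerical coincidences occur, so that these must be inspected individually. A further layer that must be tracked throughout is the outer part $G/S$: diagonal, field and graph automorphisms can simultaneously enlarge the relevant centralisers and introduce new primes into $\pi(G)$, and keeping control of $|x^G|=|x^S|\cdot[G:SC_G(x)]$ for classes of $G$ rather than of $S$ is where much of the bookkeeping will lie.
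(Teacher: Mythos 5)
Your opening reduction---coprime class sizes force, for every prime $r$, a full Sylow $r$-subgroup of $G$ inside $C_G(x)$ or $C_G(y)$, hence $G = C_G(x)C_G(y)$---is exactly the paper's starting point (its Lemma 2.2), and your dispatch of the sporadic socles by class-size data is fine. The genuine gap is in your Lie-type crux, which is precisely where the whole theorem lives: the claim that forcing $C_G(y)$ to contain Sylow subgroups for primitive prime divisors of several $q^d-1$ ``pushes $y$ into being regular in a large torus'' is asserted rather than proved, and as a mechanism it fails in the borderline cases. There exist almost simple groups admitting genuine factorisations $G = C_G(x)C_G(y)$ with $x, y$ non-trivial, classified by Gill, Giudici and Spiga: for instance $x$ an involutory graph automorphism of $\PSL_n(q)$ ($n \geq 4$ even) with $C_{\PGL_n(q)}(x) \cong \PGSp_n(q)$, paired with a semisimple $y$ having an $(n-1)$-dimensional eigenspace, and analogous configurations for unitary, symplectic and orthogonal socles. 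So two centralisers really can come within a hair of covering $\pi(G)$, with $y$ far from regular and $C_G(x)$ large and non-parabolic; eliminating these requires the finer coprimality condition rather than a generic torus argument (in the symplectic configuration the paper does it by a parity argument, showing \emph{both} class sizes are even). Relatedly, your characterisation ``class size coprime to $p$ iff long-root element'' needs repair: the correct chain is that the relevant element centralises a Sylow $p$-subgroup of $G$, hence lies in the socle---this step already needs a non-obvious lemma, namely Lemma D.25 of Parker--Pyber--Spiga--Szab\'o type cited in the paper as [PPSS]---and then lies in $Z(U)$ because $F^*(N_Y(U)) = U$; moreover $Z(U)$ is a long-root subgroup only generically, and the small groups where these statements degenerate ($\Sp_4(2)'$, $\G_2(2)'$, ${}^2\!\F_4(2)'$, ${}^2\!\G_2(3)'$, small $\Omega_n^+(2)$) are handled by machine computation in the paper.

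The paper short-circuits almost all of your programme by quoting Arad--Fisman's solution of Sz\'ep's conjecture together with the Gill--Giudici--Spiga classification of centraliser factorisations in almost simple groups: the latter shows the socle must be of Lie type (removing your alternating-group analysis at a stroke) and reduces the new work to (i) checking that the five listed families fail coprimality, and (ii) the one socle GGS excludes, $\POmega_{2m}^+(q)$ with $m \geq 4$. In that single case the paper carries out exactly the kind of argument you envisage: three primitive prime divisors $\alpha$, $\beta$, $\gamma$ of $q^{2m-2}-1$, $q^{2m-4}-1$, $q^m-1$ force $C_{\widehat{Y}}(\widehat{x})$ to act irreducibly on the natural module, after which the surviving possibilities for $x$ (an involution outside the socle) are eliminated one class at a time using Burness--Giudici's tables, triality, and a computational workaround for the missing primitive prime divisor of $2^6-1$. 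That one family costs well over a page of delicate work; your plan would require the analogous analysis in \emph{every} family of Lie type across all outer automorphism types, i.e.\ it would amount to reproving Arad--Fisman and GGS from scratch. The approach is not wrong in principle, but as written its central step is a hope standing in for the hardest part of the argument.
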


As might be expected, our proof of \cref{AlmostSimple} (and thus all of our main results), relies on the classification of finite simple groups.

If a group \(G\) has conjugacy classes \(x^G\) and \(y^G\) of coprime sizes then \(G = C_G(x) C_G(y)\) (see \cref{lem:fund lemma}), so to eliminate an almost simple group $G$ it is sufficient to show that \(G\) is not a product of centralisers. In the simple case this is Szép's conjecture \cite{Szep} which states that no non-abelian finite simple group \(G\) has such a factorisation. This was proved by Arad and Fisman in \cite[Theorem 1]{Szep1}. In our case, we employ a variant of this result due to Gill, Giudici and Spiga in \cite{Szep2} which determines all triples \((G,x,y)\) with \(G\) almost simple such that \(G = C_G(x) C_G(y)\) (see \cref{GillGiudiciSpigaTable} below) provided the socle of \(G\) is not \(\POmega_n^+(q)\) with $n \ge 8$ even. To prove \cref{AlmostSimple}, we show that the candidates for \(x\) and \(y\) in \cref{GillGiudiciSpigaTable} do not have coprime class sizes. In the remaining case where the socle of $G$ is \(\POmega_n^+(q)\), a close analysis of the possibilities for \(x\) and \(y\) again yields no examples.

Using \cref{AlmostSimple}, we obtain our first main result which gives structural information about any finite group containing non-trivial conjugacy classes of coprime sizes.

\begin{introthm}  \label{ThmBcor2}
   Suppose that $G$ is a finite group and $x$, $y \in G$ are in conjugacy classes of coprime sizes. Then the normal subgroup $\gen{x^G}\cap \gen{y^G}$ is abelian.  Furthermore, we can write 
   $$\gen{x^G}\cap \gen{y^G}= (\gen{x^G}\cap Z(\gen{y^G}))(Z(\gen{x^G}) \cap \gen{y^G}).$$
\end{introthm}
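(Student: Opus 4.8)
Write $N=\langle x^G\rangle$, $M=\langle y^G\rangle$ and $D=N\cap M$; all three are normal in $G$, and since $|x^G|$ and $|y^G|$ are coprime we have $G=C_G(x)C_G(y)$ by \cref{lem:fund lemma}. The two conclusions are linked: putting $A=N\cap Z(M)$ and $B=Z(N)\cap M$, each of $A,B$ is an abelian normal subgroup of $G$ contained in $D$, and $[A,B]=1$ because $A\le Z(M)$ centralises $M\ge B$. Hence $AB$ is an abelian normal subgroup with $AB\le D$ automatic, so the whole statement reduces to proving that $D$ is abelian together with the reverse inclusion $D\le AB$. My plan is to prove abelianness first and then read off the factorisation. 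The latter is elementary once $D$ is abelian: decompose $D=\prod_p D_p$ into primary components, each characteristic in $D$ and hence a normal $p$-subgroup of $G$, so $D_p\le O_p(G)$. As $\gcd(|x^G|,|y^G|)=1$, for every prime $p$ at least one of $p\nmid|x^G|$, $p\nmid|y^G|$ holds. If $p\nmid|y^G|$ then $C_G(y)$ contains a Sylow $p$-subgroup of $G$, so $D_p\le O_p(G)\le C_G(y)$; conjugating $[D_p,y]=1$ and using normality of $D_p$ shows that $D_p$ centralises every $G$-conjugate of $y$, hence centralises $M$, giving $D_p\le N\cap Z(M)=A$. Symmetrically $p\nmid|x^G|$ forces $D_p\le B$, and therefore $D=\prod_p D_p\le AB$.

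To prove that $D$ is abelian I would induct on $|G|$, assuming $x$ and $y$ non-central (otherwise $D$ lies in an abelian normal closure and there is nothing to prove). Choose a minimal normal subgroup $R$ of $G$ with $R\le D$. In $G/R$ the images of $x,y$ have class sizes dividing $|x^G|,|y^G|$, hence coprime, and since $R\le N\cap M$ we have $\langle\bar x^{G/R}\rangle=N/R$, $\langle\bar y^{G/R}\rangle=M/R$, with intersection $D/R$; by induction $D/R$ is abelian, so $D'\le R$, and minimality gives $D'=1$ (done) or $D'=R$. Thus in a minimal counterexample $R=D'$ is a minimal normal subgroup of $G$.

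Suppose $R$ is abelian, say a $p$-group. Because $p\nmid|x^G|$ or $p\nmid|y^G|$, the normal subgroup $O_p(G)\ge R$ is centralised by $x$ or by $y$ and, being normal, centralises $N$ or $M$; hence $R=D'\le Z(N)$ or $Z(M)$, so $R\le Z(D)$ and $D$ is nilpotent of class at most $2$. Writing $D=\prod_q O_q(D)$, the commutator subgroup $D'=R$ is a $p$-group, so every factor $O_q(D)$ with $q\neq p$ is abelian and centralises the others, and all non-commutativity is concentrated in $O_p(D)$. But $O_p(D)\le O_p(G)$ is again centralised by $x$ or $y$ and so, as above, lies in $Z(N)$ or $Z(M)$, an abelian group; this contradicts $O_p(D)$ being non-abelian.

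The remaining case, $R=D'\cong S^k$ with $S$ non-abelian simple, is where \cref{AlmostSimple} is indispensable, and it is the crux. Since $R\le N\cap M$ is non-abelian it cannot lie in $Z(N)$ or $Z(M)$, so the conjugation argument shows that neither $x$ nor $y$ centralises $R$; their images $\bar x,\bar y$ in $\bar G:=G/C_G(R)\le\Aut(S)\wr\Sym(k)$ are therefore non-trivial. As $\Inn(R)=\soc(\bar G)$ lies in $\bar G$ and has trivial centraliser in $\Aut(R)$, neither $\bar x$ nor $\bar y$ is central in $\bar G$, so $\bar x^{\bar G}$ and $\bar y^{\bar G}$ are non-trivial classes of coprime size. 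When $k=1$ the group $\bar G$ is almost simple and this contradicts \cref{AlmostSimple} outright. The main obstacle is the reduction from $k>1$: one must analyse the induced action of $G$ on the $k$ components of $R$ and the centralisers of elements of the wreath product $\Aut(S)\wr\Sym(k)$, and use coprimality to transfer the configuration to a single component, applying \cref{AlmostSimple} to $N_{\bar G}(S_1)/C_{\bar G}(S_1)$. Carrying out this wreath-product centraliser bookkeeping while keeping both induced classes non-trivial and of coprime size is the heart of the argument.
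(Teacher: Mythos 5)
Most of your proposal is sound and runs close to the paper's argument in spirit: the factorisation \(D \le AB\) via primary components, once \(D\) is known to be abelian, is correct (the paper obtains the same identity by writing \(T = K \cap L\) as the product of its subgroups \(O_r(T)\), each of which is centralised by \(\gen{x^G}\) or \(\gen{y^G}\)), and your abelian-\(R\) case in the induction is fine. However, the proof is incomplete: in the case \(R = D' \cong S^k\) with \(k > 1\) you only describe a plan (``wreath-product centraliser bookkeeping \dots is the heart of the argument'') without carrying it out, and this is precisely where the real content lies. As written this is a genuine gap, not a routine verification: nothing in your sketch guarantees that \(x\) and \(y\) both normalise a common component of \(R\), nor that their images in \(N_{\bar G}(S_1)/C_{\bar G}(S_1)\) remain non-trivial with coprime class sizes, and without these the appeal to \cref{AlmostSimple} cannot be made.

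The paper closes exactly this gap in \cref{thmBcor}, and the missing idea is a Sylow argument you could graft directly into your minimal-counterexample framework. Write \(R = T_1 \times \cdots \times T_k\). By \cref{lem:fund lemma}, for every prime \(r\) dividing \(\abs{R}\) one of \(C_G(x)\), \(C_G(y)\) contains a Sylow \(r\)-subgroup of \(G\); say \(x\) centralises a Sylow \(r\)-subgroup of \(R\). Such a subgroup meets every \(T_i\) non-trivially, and since the \(T_i\) intersect pairwise trivially, an element centralising it must normalise every \(T_i\). Moreover \(y\) must centralise a Sylow \(t\)-subgroup of \(R\) for some prime \(t\) (otherwise \(C_G(x)\) would contain a Sylow subgroup of \(R\) for every prime, forcing \(x\) to centralise \(R\), which you have already excluded), so \(y\) normalises every component as well. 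Hence both elements lie in the normal subgroup \(M = \bigcap_i N_G(T_i)\); replacing \(x\) and \(y\) by \(G\)-conjugates not centralising \(T_1\) (possible since \(G\) permutes the components transitively and neither element centralises \(R\)) and projecting \(M\) to \(\Aut(T_1)\), two applications of \cref{CollectionLemma} yield non-trivial conjugacy classes of coprime sizes in an almost simple group, contradicting \cref{AlmostSimple}. Separately, note that the paper dispenses with induction altogether: it quotients by the soluble radical using \cref{CollectionLemma}, observes that \(F^*(G)\) is then the product of the minimal normal subgroups, and applies the corollary above inside \(G/C_G(N)\) for each minimal normal subgroup \(N\); your induction is a workable alternative, but the \(k > 1\) case must actually be proved, not flagged.
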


Recall that a component of a finite group \(G\) is a quasisimple subnormal subgroup. As a consequence of \cref{ThmBcor2} we also obtain the following theorem.

\begin{introthm}    \label{thmB}
    Suppose that \(G\) is a finite group and \(x\), \(y \in G\) lie in conjugacy classes of coprime sizes. Assume that $K$ is a component of $G$ or $K=O_r(G)$ for some prime $r$. Then \(\gen{K^G}\) is centralised by \(x\) or by \(y\). In particular, if \(G\) is almost simple then \(x = 1\) or \(y = 1\).
\end{introthm}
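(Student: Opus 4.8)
The plan is to put $M = \gen{K^G}$, a normal subgroup of $G$, and to show in each of the two cases for $K$ that one of $x$, $y$ lies in $C_G(M)$; the final ``in particular'' clause is then immediate.

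I would first dispose of the case $K = O_r(G)$, which uses only coprimality. Since $\gcd(|x^G|,|y^G|) = 1$, the prime $r$ divides at most one of the two class sizes; suppose $r \nmid |y^G| = |G : C_G(y)|$, the other case being symmetric. Then a Sylow $r$-subgroup $R$ of $C_G(y)$ realises the full $r$-part of $|G|$ (because $r \nmid |G : C_G(y)|$), so $R$ is a Sylow $r$-subgroup of $G$. As $M = O_r(G)$ is a normal $r$-subgroup of $G$ it is contained in every Sylow $r$-subgroup, in particular $M \le R \le C_G(y)$, whence $y$ centralises $M$.

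The substantive case is $K$ a component, and here Theorem B does the work. Set $L = \gen{x^G}$ and $N = \gen{y^G}$, both normal in $G$. I would invoke the standard fact that a component $K$ and a normal subgroup $L$ of $G$ satisfy either $K \le L$ or $[K, L] = 1$. In the second case, for every $g \in G$ we have $[K^g, L] = [K^g, L^g] = [K,L]^g = 1$ (using $L = L^g$), so $L$, and in particular $x$, centralises $\gen{K^G} = M$; in the first case $K^g \le L^g = L$ for all $g$, so $M \le L$. Running the same argument with $N$ in place of $L$, I conclude that either $x$ or $y$ centralises $M$, or else $M \le L \cap N = \gen{x^G} \cap \gen{y^G}$. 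But Theorem B says this intersection is abelian, while $M$ contains the quasisimple, hence non-abelian, subgroup $K$ and so is non-abelian---a contradiction. This settles the component case.

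Finally, if $G$ is almost simple with socle $S$, then $S$ is a component, $\gen{S^G} = S$ since $S \trianglelefteq G$, and $C_G(S) = 1$; applying the statement just proved with $K = S$ forces $x$ or $y$ to centralise $S$, that is, $x = 1$ or $y = 1$. I expect the only real obstacle to be deploying Theorem B correctly in the component case, namely recognising that the two clauses of the component dichotomy translate exactly into ``$x$ centralises $M$'' and ``$M \le \gen{x^G}$'', so that failure of the conclusion would place the non-abelian group $M$ inside the abelian intersection. Granted Theorems A and B, the remaining verifications---that $M$ is non-abelian and that $C_G(S)$ is trivial for $G$ almost simple---are routine.
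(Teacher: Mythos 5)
Your proposal is correct and follows essentially the same route as the paper: the \(O_r(G)\) case via coprimality forcing a Sylow \(r\)-subgroup into a centraliser (the paper's Lemma~\ref{lem:fund lemma}), and the component case via the dichotomy that a normal subgroup either centralises a component or contains its normal closure (the paper cites \cite[(31.4)]{aschbacherfg}), concluding by the contradiction that the quasisimple-containing \(\gen{K^G}\) would lie in the abelian intersection \(\gen{x^G}\cap\gen{y^G}\) of Theorem~\ref{ThmBcor2}. The only difference is that you unpack the cited dichotomy and the conjugation step \([K^g,L]=[K,L]^g=1\) explicitly, which the paper leaves to the reference.
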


Our original motivation relates to the \emph{common divisor graph} $\Gamma(G)$, which is a useful tool devised to explore the arithmetical structure of \(\cs(G)\). The vertex set of $\Gamma(G)$ is $\cs(G)\setminus\{1\}$, and distinct vertices are adjacent if and only if they are not coprime integers. The definition of this graph is inspired by a closely related graph introduced in \cite{BHM} by Bertram, Herzog and Mann: their graph has as vertex set the non-central conjugacy classes of $G$, with two of them adjacent if and only if their sizes are not coprime. It is worth pointing out that this last graph and $\Gamma(G)$ have the same number of connected components. Furthermore, the connected components have the same diameter (except when $G$ has at least two non-central conjugacy classes of the same size and this size is coprime with any other class size; see Section~8 of \cite{Lewis}).
As an immediate application of the aforementioned result of Arad and Fisman, it is noted in \cite{BHM} that $\Gamma(G)$ is complete for simple groups $G$. Now, our \cref{AlmostSimple} generalises it by showing that \(\Gamma(G)\) is complete if \(G\) is an almost simple group. %the AF result is Theorem 2 if needed

Recall that, for a set of primes \(\pi\), we say an element is \emph{$\pi$-regular} if its order is not divisible by any prime in \(\pi\), and a conjugacy class is \emph{$\pi$-regular} if it consists of $\pi$-regular elements. If such an element is \(\{p\}\)-regular for some prime $p$ then we write \emph{\(p\)-regular} for simplicity. Consider the induced subgraph \(\Gamma_{p}(G)\) of \(\Gamma(G)\) defined on the \(p\)-regular conjugacy classes of \(G\). This graph is also closely related to a graph that has been studied extensively over the years (mainly by Beltrán and Felipe, see \cite{BF1,BF2,BF4,BF5}) with a convenient summary found in \cite{BFsurvey}. It is important to remark that the graphs \(\Gamma(G)\) and \(\Gamma_{p}(G)\) share many properties, including upper bounds on both the number of connected components and their diameters. Nevertheless, these properties are not immediate consequences of the fact that \(\Gamma_p(G)\) is a subgraph of \(\Gamma(G)\). We also note here that if $p$ does not divide \(\abs{G}\) then \(\Gamma_{p}(G)\) is the same as \(\Gamma(G)\), so results for \(\Gamma_{p}(G)\) are, in fact, generalisations of the corresponding results for \(\Gamma(G)\).

The local structure of \(G\) has been used to determine many properties of \(\Gamma_p(G)\), but much of the corresponding analysis has been restricted to the case where \(G\) is \(p\)-soluble. A core lemma in the proofs of many of these results is \cite[Lemma 1 (2)]{LZ} which says that if \(G\) is \(p\)-soluble then the product of two $p$-regular classes of coprime sizes is itself a $p$-regular class. In many cases, this is the only use of the \(p\)-solubility hypothesis. The initial motivation of this article was to remove the dependency on \(p\)-solubility from these results by generalising \cite[Lemma 1 (2)]{LZ} so that they hold for all finite groups. This is achieved in our final main result which follows from \cref{ThmBcor2} and settles a conjecture stated in \cite{S}. In fact, we demonstrate it in full generality for $\pi$-regular conjugacy classes.

\begin{introthm}    \label{thmA}
    Let $G$ be a finite group and $\pi$ be a set of primes. If $x$ and $y$ are \(\pi\)-regular elements of $G$ whose conjugacy classes have coprime sizes, then $x^G y^G$ is a \(\pi\)-regular conjugacy class.
\end{introthm}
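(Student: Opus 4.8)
The plan is to split the statement into its two assertions—that $x^Gy^G$ is a single conjugacy class, and that this class is $\pi$-regular—and to reduce the second, more substantial, assertion to a single prime.

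First I would dispose of the ``single class'' part. Since $\abs{x^G}$ and $\abs{y^G}$ are coprime, \cref{lem:fund lemma} gives $G=C_G(x)C_G(y)$. A direct computation then shows $x^Gy^G=(xy)^G$: conjugating $x^gy^h$ by $g$ reduces matters to proving $xy^t\sim xy$ for every $t\in G$, and writing $t^{-1}=cd$ with $c\in C_G(x)$ and $d\in C_G(y)$ one obtains $y^t=cyc^{-1}$ and hence $c^{-1}(xy^t)c=xy$. Thus it remains to prove that $xy$ is $\pi$-regular. As $x^Gy^G$ is a single class, an element of it is $\pi$-regular precisely when it is $p$-regular for every $p\in\pi$; since $x,y$ are $p$-regular and $\abs{x^G}$, $\abs{y^G}$ are coprime for each such $p$, I may fix a prime $p\in\pi$ and prove that $xy$ is $p$-regular.

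The key structural input is \cref{ThmBcor2}. Writing $N=\gen{x^G}$ and $M=\gen{y^G}$, the commutator $[x,y]=x^{-1}x^y$ lies in $N$ and, equally, $[x,y]=(y^{-1})^xy$ lies in $M$, so $[x,y]\in A:=N\cap M$, which is abelian and normal in $G$ by \cref{ThmBcor2}. Hence $\bar x$ and $\bar y$ commute in $G/A$, so $\overline{xy}=\bar x\,\bar y$ is a product of commuting $p$-regular elements and is therefore $p$-regular. Consequently the $p$-part $s:=(xy)_p$ maps trivially to $G/A$, i.e. $s\in A$, and in fact $s\in A_p\le O_p(G)$, since the Sylow $p$-subgroup of the abelian group $A$ is characteristic in $A$ and hence normal in $G$. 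Passing to $G/O_{p'}(G)$ leaves the hypotheses intact and does not change the $p$-part of $xy$, so I may further assume $O_{p'}(G)=1$.

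For the final step I would exploit the coprimeness asymmetrically: say $p\nmid\abs{x^G}$, so $C_G(x)$ contains a Sylow $p$-subgroup and hence $O_p(G)$; as $O_p(G)\trianglelefteq G$ this forces $O_p(G)\le C_G(\gen{x^G})=C_G(N)$, whence $s\in A_p\le Z(N)\cap M$ and $s$ commutes with both $x$ and $y$. If $G$ had no components then $F^*(G)=O_p(G)$ and $C_G(F^*(G))\le F^*(G)$ would give $x\in C_G(O_p(G))\le O_p(G)$, forcing the $p$-regular element $x$ to be trivial and finishing the proof. The substance of the argument, and the step I expect to be the main obstacle, is therefore the presence of components. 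Here I would invoke \cref{thmB}: each $\gen{K^G}$, for $K$ a component or $K=O_r(G)$, is centralised by $x$ or by $y$, so the generalised Fitting subgroup decomposes as a central product $F^*(G)=F_xF_y$ with $x\in C_G(F_x)$, $y\in C_G(F_y)$ and $O_p(G)\le F_x$. Combining this decomposition with $C_G(F^*(G))\le F^*(G)$ and the coprime action of $\gen{x}$ and $\gen{y}$ on the normal $p$-subgroup $O_p(G)$ should pin the central $p$-element $s$ down to the identity; this is the delicate point where \cref{AlmostSimple} and its consequences (and ultimately the classification) do the real work.
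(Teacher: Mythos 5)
Your first half is correct and coincides with the paper's opening moves: $x^Gy^G=(xy)^G$ via \cref{lem:fund lemma}, $[x,y]\in A=\gen{x^G}\cap\gen{y^G}$ abelian and normal via \cref{ThmBcor2}, and the deductions that $\overline{xy}$ is $p$-regular modulo $A$, that $s=(xy)_p\in A_p\le O_p(G)$, and that one may assume $O_{p'}(G)=1$, are all sound. But the proof is not complete, and you say so yourself: when $G$ has components, the conclusion $s=1$ is only asserted (``should pin the central $p$-element $s$ down to the identity''), and no argument is given for how the central product $F^*(G)=F_xF_y$ from \cref{thmB} together with $C_G(F^*(G))\le F^*(G)$ would yield it. There is also an unjustified claim en route: from $s\in A_p\le Z(\gen{x^G})\cap\gen{y^G}$ you correctly get $[s,x]=1$, but \emph{not} $[s,y]=1$; that would require $s\in Z(\gen{y^G})$ or $s\in C_G(y)$, and neither follows, since $p$ may well divide $\abs{y^G}$, so the argument $O_p(G)\le C_G(\gen{x^G})$ cannot be run symmetrically in $y$. (The decomposition $A=(\gen{x^G}\cap Z(\gen{y^G}))(Z(\gen{x^G})\cap\gen{y^G})$ in \cref{ThmBcor2} writes $s$ as a product of two elements, one commuting with $y$ and one with $x$, which is weaker than what you claim.) So there is a genuine gap at exactly the point you flag as the main obstacle.

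What is missing has an elementary completion, and you already hold every ingredient for it; it is the paper's own finish. Since $[x,y]\in A$ with $A$ abelian, the subgroup $X=\gen{x,y}A$ is metabelian, hence soluble, hence $\pi$-separable, and it contains $x$ and $y$, so it meets both classes. Now \cref{lem:not sol sbgrp} (existence and conjugacy of Hall $\pi'$-subgroups in $\pi$-separable groups) produces a single Hall $\pi'$-subgroup of $X$ containing conjugates $u\in x^G$ and $w\in y^G$; then $uw$ is $\pi$-regular and lies in $x^Gy^G=(xy)^G$, so $xy$ is $\pi$-regular. This treats all primes of $\pi$ at once, requires no passage to $G/O_{p'}(G)$ and no analysis of $F^*(G)$, and — contrary to your closing expectation — needs no further input from \cref{AlmostSimple} or the classification: all of that is already packed into \cref{ThmBcor2}, which you have invoked. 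In short: replace your final two paragraphs by the single observation that $\gen{x,y}A$ is soluble and apply \cref{lem:not sol sbgrp}.
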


Finally, as already mentioned in \cite[Remark 2.2]{S}, \cref{thmA} implies that several results in the literature concerning graphs defined on $p$-regular conjugacy classes in $p$-soluble groups remain valid even when removing the assumption of $p$-solubility. In \cref{sec:corollaries} we will present a detailed discussion of these generalisations.

\section*{Acknowledgements}

This paper is based upon work supported by the Heilbronn Institute Focused Research Workshop ``\href{https://heilbronn.ac.uk/2025/03/12/focused-research-finite-groups-conjugacy-classes-and-related-problems/}{Finite groups, conjugacy classes and related problems}'', during a visit of all the authors, except the seventh, to the School of Mathematics at the University of Bristol (UK) in June 2025 organised by the fifth and sixth authors. The second author was partially supported by the National Research, Development and Innovation Office (NKFIH) Grant No.~K138596 and Grant No.~K138828. The research of the third author was partially supported by INdAM-GNSAGA, and by the European Union-Next Generation EU, Missione 4 Componente 1, CUP B53D23009410006, PRIN 2022 2022PSTWLB - Group Theory and Applications. The fifth, sixth and ninth authors were supported by the Additional Funding Programme for Mathematical Sciences, delivered by EPSRC (EP/V521917/1) and the Heilbronn Institute for Mathematical Research. 

\section{Preliminaries}

All groups considered hereafter are assumed to be finite. The greatest common divisor of two positive integers $a$ and $b$ is denoted $(a,b)$. For a set of primes $\pi$, a $\pi$-number is a natural number whose prime divisors all lie in $\pi$. We denote by $\pi'$ the set of primes not in $\pi$.  
The largest $\pi$-number that divides the positive integer $n$ is denoted by $n_{\pi}$. For an integer \(n\) we define \(\pi(n)\) to be the set of prime divisors of \(n\) and for a finite group \(G\) we use \(\pi(G)\) to denote \(\pi(\abs{G})\).

The following well-known lemma will be used throughout the article, often without reference. We include its proof for completeness. 

\begin{lemma}   \label{CollectionLemma}
    Let $G$ be a finite group, $N$ a normal subgroup of $G$, and $x \in G$.  Then $\abs{x^N} \abs{(xN)^{G/N}}$ divides $\abs{x^G}$.
\end{lemma}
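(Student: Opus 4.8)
The plan is to rewrite all three quantities as subgroup indices and reduce the claim to the multiplicativity of indices along a chain of subgroups. Writing $C = C_G(x)$, the orbit–stabiliser theorem gives $\abs{x^G} = [G : C]$, $\abs{x^N} = [N : C_N(x)] = [N : N \cap C]$, and $\abs{(xN)^{G/N}} = [G/N : C_{G/N}(xN)]$. Thus it suffices to show that $[N : N \cap C]\,[G/N : C_{G/N}(xN)]$ divides $[G : C]$.

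For the first factor, counting via the product formula $\abs{NC} = \abs{N}\,\abs{C}/\abs{N\cap C}$ (valid since $N \trianglelefteq G$ makes $NC$ a subgroup) gives the exact identity $[N : N \cap C] = [NC : C]$. For the second factor, I would observe that any element of $C$ fixes $x$, so its image centralises $xN$ in $G/N$; this yields the inclusion $NC/N \le C_{G/N}(xN)$. Consequently $C_{G/N}(xN)$ sits between $NC/N$ and $G/N$, so $[G/N : C_{G/N}(xN)]$ divides $[G/N : NC/N] = [G : NC]$.

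Finally I would combine these two facts: since divisibility of each factor passes to the product, $\abs{x^N}\,\abs{(xN)^{G/N}}$ divides $[NC : C]\,[G : NC]$, and by the tower law along $C \le NC \le G$ this product equals $[G : C] = \abs{x^G}$, as required.

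The argument is almost entirely formal, and the only genuinely substantive point is that $C_{G/N}(xN)$ may be strictly larger than the image $NC/N$ of the centraliser of $x$; this is precisely why the statement asserts divisibility rather than equality, and it is accommodated by taking $[G/N : C_{G/N}(xN)]$ as a divisor of $[G : NC]$ instead of equating the two. Conceptually, the same divisibility can be read off from the conjugation action of $N$ on the class $x^G$: the $N$-orbits all have common size $\abs{x^N}$ and are permuted transitively by $G/N$ in a manner equivariant with the $G/N$-action on $(xN)^{G/N}$, so the number of orbits $\abs{x^G}/\abs{x^N}$ is a multiple of $\abs{(xN)^{G/N}}$.
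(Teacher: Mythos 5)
Your proof is correct and takes essentially the same approach as the paper: both hinge on the identity \(\abs{x^G}/\abs{x^N} = \abs{G : NC_G(x)}\) (via the product formula for \(NC_G(x)\)) together with the inclusion \(C_G(x)N/N \leq C_{G/N}(xN)\), which gives that \(\abs{(xN)^{G/N}}\) divides \(\abs{G : NC_G(x)}\). The only cosmetic difference is that you organise the computation through the tower law along \(C_G(x) \leq NC_G(x) \leq G\), whereas the paper verifies the same index identity in one line.
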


\begin{proof}
We have that $\abs{G : NC_G(x)}= (\abs{G} \abs{C_N(x)})/(\abs{N} \abs{C_G(x)}) = \abs{x^G}/\abs{x^N}$ is an integer as $NC_G(x)$ is a subgroup of $G$. Since $\ce{G}{x}N/N \leq \ce{G/N}{xN}$, we have that $\abs{(xN)^{G/N}}$ divides $\abs{G : N\ce{G}{x}}$.
\end{proof}

We also highlight the following key fact which is essentially \cite[Lemma 1]{BHM}.

\begin{lemma}   \label{lem:fund lemma}
    Suppose that \(G\) is a finite group and $x$, $y \in G$ are such that $(\abs{x^G}, \abs{y^G}) = 1$. Then, for any \(r \in \pi(G)\), at least one of \(C_G(x)\) and \(C_G(y)\) contains a Sylow \(r\)-subgroup of \(G\). In particular, \(G = C_G(x) C_G(y)\) and, further, \((xy)^G = x^G y^G\).
\end{lemma}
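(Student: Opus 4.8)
The plan is to prove the three assertions in the order they are stated, since each one feeds naturally into the next.

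First I would handle the Sylow containment, which is the arithmetic core of the statement. Fix \(r \in \pi(G)\). Because \(\abs{x^G}\) and \(\abs{y^G}\) are coprime, \(r\) divides at most one of them; say \(r \nmid \abs{x^G}\). Since \(\abs{x^G} = \abs{G:C_G(x)}\), this forces \(\abs{C_G(x)}_r = \abs{G}_r\), so a Sylow \(r\)-subgroup of \(C_G(x)\) already has full \(r\)-part and is therefore a Sylow \(r\)-subgroup of \(G\). The symmetric conclusion holds if instead \(r \nmid \abs{y^G}\), so for every prime \(r\) at least one of \(C_G(x)\), \(C_G(y)\) contains a Sylow \(r\)-subgroup of \(G\).

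Next I would derive the factorisation \(G = C_G(x)C_G(y)\), writing \(A = C_G(x)\) and \(B = C_G(y)\) for brevity. This is the standard consequence of the coprimality of the two indices, obtained by squeezing \(\abs{G:A\cap B}\) between matching bounds. On one side, \(\abs{x^G}=\abs{G:A}\) and \(\abs{y^G}=\abs{G:B}\) each divide \(\abs{G:A\cap B}\) (as \(A\cap B\) lies in both \(A\) and \(B\)), so by coprimality their product \(\abs{x^G}\abs{y^G}\) divides \(\abs{G:A\cap B}\). On the other side, the product formula \(\abs{AB}=\abs{A}\abs{B}/\abs{A\cap B}\) together with \(AB\subseteq G\) gives \(\abs{A:A\cap B}=\abs{AB:B}\le \abs{G:B}\), whence \(\abs{G:A\cap B}=\abs{G:A}\,\abs{A:A\cap B}\le \abs{x^G}\abs{y^G}\). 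Equality follows, and a short rearrangement then yields \(\abs{AB}=\abs{G}\), i.e. \(G = C_G(x)C_G(y)\).

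Finally, for \((xy)^G = x^G y^G\) I would argue by double inclusion. The containment \((xy)^G \subseteq x^G y^G\) is immediate from \((xy)^g = x^g y^g\), and \(x^G y^G\) is visibly a union of conjugacy classes. For the reverse inclusion I would first reduce a general element: any \(x^g y^h\) is conjugate (by \(h^{-1}\)) to \(x^{gh^{-1}} y\), so it suffices to show \(x^k y \sim xy\) for every \(k\in G\). Here the factorisation does the work: writing \(k = ab\) with \(a\in C_G(x)\) and \(b\in C_G(y)\), one computes \(x^k = x^{ab} = x^b\), so \(x^k y = x^b y\) is conjugate by \(b^{-1}\) to \(x\,y^{b^{-1}} = xy\). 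Hence every element of \(x^G y^G\) is conjugate to \(xy\), giving \(x^G y^G \subseteq (xy)^G\) and so equality.

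The arithmetic of the first two steps is routine; the step demanding the most care is the conjugation bookkeeping in the last part, where one must keep precise track of which factor of \(k=ab\) centralises which of \(x\) and \(y\) so that the two independent conjugations of \(x\) and of \(y\) collapse the whole product set \(x^G y^G\) onto the single class \((xy)^G\).
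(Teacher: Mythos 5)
Your proposal is correct and follows essentially the same route as the paper: the Sylow claim by comparing \(r\)-parts of the indices, the factorisation via coprimality of \(\abs{G:C_G(x)}\) and \(\abs{G:C_G(y)}\) together with \(\abs{HK}=\abs{H}\abs{K}/\abs{H\cap K}\), and the class-product equality by writing \(gh^{-1}=ab\) with \(a\in C_G(x)\), \(b\in C_G(y)\) and collapsing \(x^g y^h\) to a conjugate of \(xy\). The only difference is presentational — you spell out the arithmetic squeeze and the double inclusion that the paper leaves implicit.
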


\begin{proof}
    The first claim is clear from the hypothesis, and that \(G = C_G(x) C_G(y)\) follows from the fact that these centralisers have coprime index and for \(H\), \(K \leq G\) we have \(\abs{HK} = \abs{H} \abs{K}/\abs{H \cap K}\). Now, let \(g\), \(h \in G\) and write \(gh^{-1} = ab\) for some \(a \in C_G(x)\), \(b \in C_G(y)\). Then
    \[x^g y^h = (x^{gh^{-1}} y)^h = (x^{ab} y)^h = (x^b y^b)^h = (xy)^{bh}\]
    as required.
\end{proof}

\begin{lemma}   \label{lem:not sol sbgrp} 
    Let \(G\) be a finite group, let $\pi \subseteq \pi(G)$ be a set of primes and let \(X \le G\) be a \(\pi\)-separable subgroup of $G$. Suppose further that \(x\), \(y \in G\) are \(\pi\)-regular elements of $G$ with \((\abs{x^G}, \abs{y^G}) = 1\). If \(x^G \cap X \neq \emptyset \neq y^G \cap X\) then \(xy\) is also \(\pi\)-regular.
\end{lemma}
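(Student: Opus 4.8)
The plan is to exhibit a single element of the conjugacy class $(xy)^G$ that is visibly \(\pi\)-regular, and then to transfer this property to $xy$ itself via \cref{lem:fund lemma}. Since $x^G \cap X \neq \emptyset \neq y^G \cap X$, I would begin by fixing representatives $u \in x^G \cap X$ and $v \in y^G \cap X$. As conjugates of $x$ and $y$, both $u$ and $v$ are \(\pi\)-regular (i.e.\ \(\pi'\)-elements), and crucially both now lie in $X$. The underlying idea is that, although a product of two \(\pi\)-regular elements need not be \(\pi\)-regular in general, it will be so whenever the two factors can be arranged to lie in a common \(\pi'\)-subgroup, and this is exactly what the \(\pi\)-separability of $X$ will provide.

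The key step is to exploit Hall \(\pi'\)-subgroups of $X$. Because $X$ is \(\pi\)-separable, and hence \(\pi'\)-separable, it satisfies the full conclusion of Hall's theorem for the set $\pi'$: Hall \(\pi'\)-subgroups of $X$ exist, are all conjugate in $X$, and every \(\pi'\)-subgroup of $X$ is contained in one of them. Applying the containment property to the \(\pi'\)-subgroup $\gen{u}$, I would choose a Hall \(\pi'\)-subgroup $H$ of $X$ with $u \in H$. Applying it again to $\gen{v}$ and then using the conjugacy of Hall \(\pi'\)-subgroups, there exists $h \in X$ with $v^h \in H$. Note that $v^h \in y^G$, since $h \in X \le G$. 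Now $u, v^h \in H$ and $H$ is a \(\pi'\)-group, so $u v^h \in H$ is a \(\pi'\)-element; that is, $u v^h$ is \(\pi\)-regular.

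Finally, I would transfer this back to $xy$. As $u \in x^G$ and $v^h \in y^G$, and since $(\abs{x^G},\abs{y^G})=1$ by hypothesis, \cref{lem:fund lemma} gives $u v^h \in x^G y^G = (xy)^G$. Thus $xy$ is conjugate in $G$ to the \(\pi\)-regular element $u v^h$; as \(\pi\)-regularity depends only on the order of an element and is therefore preserved under conjugation, $xy$ is \(\pi\)-regular, as required. I expect the genuinely load-bearing point to be the opening of the middle paragraph: the whole argument rests on the fact that a \(\pi\)-separable group, even when it is insoluble, admits a complete Hall \(\pi'\)-theory (existence, conjugacy, and the containment property), which is the classical theorem of Hall and Čunihin and is precisely what permits the hypothesis of \(\pi\)-separability rather than solubility to be used. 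The coprimality hypothesis, by contrast, enters only at the last step, where \cref{lem:fund lemma} is invoked to recognise $u v^h$ as an element of $(xy)^G$; without it, the \(\pi\)-regularity of $u v^h$ would carry no information about $xy$.
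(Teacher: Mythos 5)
Your proof is correct and follows essentially the same route as the paper: the paper also fixes representatives of $x^G$ and $y^G$ in $X$, uses the Hall--\v{C}unihin theory of Hall $\pi'$-subgroups in the $\pi$-separable group $X$ (cited there as \cite[Theorem 6.3.6]{Gorenstein}) to place $X$-conjugates of both representatives inside a single Hall $\pi'$-subgroup $H$, observes that their product in $H$ is $\pi$-regular, and concludes via \cref{lem:fund lemma} that $x^Gy^G=(xy)^G$ contains this element, so $xy$ is $\pi$-regular. The only cosmetic difference is that you choose $H$ to contain $u$ and then conjugate $v$ into $H$, whereas the paper conjugates both elements into a common $H$; these are equivalent uses of the same covering and conjugacy properties.
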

\begin{proof} 
    Set $L=x^G$ and $K=y^G$. Assume that $a\in L \cap X$, $b\in K \cap X$. Then by \cite[Theorem 6.3.6]{Gorenstein} there exists a Hall $\pi'$-subgroup $H$ of $X$ such that $a^X\cap H \ne \emptyset \ne b^X \cap H$. Letting $u\in a^X\cap H \subseteq L\cap H$ and $w \in b^X \cap H\subseteq K\cap H$, since $H$ consists of $\pi$-regular elements, we have that $uw$ is $\pi$-regular. As $KL$ is a conjugacy class by \cref{lem:fund lemma}, and contains $uw$, we conclude that $xy$ is $\pi$-regular. This proves the lemma.
\end{proof}

\begin{lemma}   \label{lem:is normal} Let \(G\) be a finite group with \(x\), \(y \in G\) such that \(G = C_G(x) C_G(y)\). Suppose that $C_G(y) \le H \le G$ and $x^G \cap H \neq \emptyset$. Then $\gen{x^G} \leq H$.

\end{lemma}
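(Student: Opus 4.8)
The plan is to prove the stronger statement $x \in H$ directly, after which the desired inclusion $\gen{x^G} \le H$ drops out immediately. The engine behind everything is the factorisation $G = C_G(x) C_G(y)$, which collapses the entire class $x^G$ onto a single $C_G(y)$-orbit.

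First I would record the key reduction. For an arbitrary $g \in G$, write $g = uv$ with $u \in C_G(x)$ and $v \in C_G(y)$; then $x^g = x^{uv} = (x^u)^v = x^v$, since $u$ centralises $x$. Hence every conjugate of $x$ already lies in the smaller set $x^{C_G(y)}$, that is, $x^G = x^{C_G(y)}$. Because $C_G(y) \le H$ by hypothesis, it follows that $x^G = x^{C_G(y)} \subseteq x^H$; in particular $x^G \subseteq H$ the moment we know that $x$ itself lies in $H$.

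Next I would upgrade the hypothesis $x^G \cap H \neq \emptyset$ to the statement $x \in H$. Pick $a \in G$ with $x^a \in H$ and factor $a = u'v'$ with $u' \in C_G(x)$ and $v' \in C_G(y)$. By the computation above, $x^a = x^{v'} = (v')^{-1} x v'$, so $x = v'\,x^a\,(v')^{-1}$. Since $v' \in C_G(y) \le H$ and $x^a \in H$, this exhibits $x$ as a product of elements of $H$, whence $x \in H$.

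Combining the two steps, $x \in H$ together with $x^G = x^{C_G(y)}$ and $C_G(y) \le H$ yields $x^G \subseteq H$, and therefore $\gen{x^G} \le H$, as required. The only point that needs care is the distinction between \emph{a conjugate} of $x$ lying in $H$ and $x$ \emph{itself} lying in $H$; this is precisely what the factorisation $G = C_G(x) C_G(y)$ resolves, and I expect that upgrade to be the one genuinely substantive step, with the remainder being a routine orbit computation.
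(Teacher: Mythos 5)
Your proof is correct and follows essentially the same route as the paper: both rest on the observation that the factorisation $G = C_G(x)C_G(y)$ collapses the class to $x^G = x^{C_G(x)C_G(y)} = x^{C_G(y)} \subseteq H$. The only difference is cosmetic: where the paper simply says ``we may assume $x \in H$'' (implicitly replacing $x$ by a conjugate), you derive $x \in H$ explicitly from $x^a = x^{v'}$ with $v' \in C_G(y) \le H$, which makes that reduction rigorous without needing to check that the factorisation hypothesis is preserved under conjugation.
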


\begin{proof} 

    We may assume that $x \in H$. Then $x^G = x^{C_G(x)C_G(y)} = x^{C_G(y)} \subset x^H \subset H$, as required.
\end{proof}

The following lemma collects information we require from \cite[Theorem 1.1]{Szep2}.

\begin{lemma}   \label{GillGiudiciSpiga}
    Suppose that \(G\) is an almost simple group whose socle \(Y\) is not isomorphic to \(\POmega_n^+(q)\) for \(n \geq 8\). Assume that \(x\), \(y \in G\) are such that \(G = C_G(x) C_G(y)\). Then \((G,x,y)\) is recorded in \cref{GillGiudiciSpigaTable} (up to swapping \(x\) and \(y\)).
\end{lemma}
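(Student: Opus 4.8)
The plan is to read the statement off directly from the classification in \cite{Szep2}. As noted in the introduction, \cite[Theorem 1.1]{Szep2} determines, for every almost simple group \(G\) whose socle is not \(\POmega_n^+(q)\) with \(n \geq 8\) even, all pairs of elements \(x\), \(y \in G\) for which \(G = C_G(x)C_G(y)\). Under the hypotheses of the present lemma the conclusion is therefore exactly the content of that theorem, and the only remaining task is organisational: to reproduce the relevant data as \cref{GillGiudiciSpigaTable}. So the proof is essentially a transcription, and the work lies in correctly extracting and presenting the cited classification rather than in establishing anything new.

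Concretely, I would first check that the hypotheses align, confirming that the exclusion of \(\POmega_n^+(q)\) in our statement coincides with the case left open in \cite{Szep2} and that ``almost simple'' is used in the same sense in both sources. Next, for each factorisation \(G = AB\) appearing in \cite[Theorem 1.1]{Szep2}, I would identify the corresponding elements \(x\) and \(y\) together with their centralisers \(A = C_G(x)\) and \(B = C_G(y)\), recording the isomorphism type of \(G\) and the pair \((x,y)\) in the table. Since the defining factorisation is symmetric in its two factors, this is naturally done up to swapping \(x\) and \(y\); where \cite{Szep2} describes a factor structurally rather than via an explicit element, I would exhibit a representative whose centraliser realises that factor.

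The main obstacle is not conceptual but one of careful bookkeeping and notational translation: one must ensure that every case of \cite[Theorem 1.1]{Szep2} is faithfully captured, that the conventions for labelling the simple groups and their diagonal, field and graph automorphisms agree between the two papers, and that the ``up to swapping \(x\) and \(y\)'' clause is applied consistently, so that no genuine triple is omitted and no spurious one introduced. Because the socle \(\POmega_n^+(q)\) with \(n \geq 8\) is explicitly excluded here and handled separately elsewhere in the paper, no further argument is required within this lemma beyond the transcription of the cited classification into \cref{GillGiudiciSpigaTable}.
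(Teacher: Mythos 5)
Your proposal is correct and coincides with what the paper does: the lemma is given no proof there, being presented as a direct transcription of \cite[Theorem 1.1]{Szep2}, with \cref{GillGiudiciSpigaTable} explicitly labelled as an excerpt of \cite[Table 1]{Szep2} covering the cases where \(G = C_G(x)C_G(y)\) and the socle \(\POmega_n^+(q)\), \(n \geq 8\), excluded exactly as in the cited theorem. The bookkeeping and notational checks you describe are precisely the implicit content of the paper's citation, so nothing further is needed.
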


\begin{table} [h]
        \begin{tabular}{cccc}   \toprule
        \(\soc(G)\)         &   \(x\)                   &   \(y\)                   &   Remarks \\ \midrule
        \(\PSL_n(q)\)       &   graph aut.   &   order dividing \(q-1\)  &   \(n \geq 4\) even  \\
                            &   \(C_{\PGL_n(q)}(x) \cong \PGSp_n(q)\)   &   \((n-1)\)-dim. eigenspace   &      \\[10pt]
        \(\PSU_n(q)\)       &   \(C_Y(x) \cong \PSp_n(q)\)     &   order dividing \(q+1\)  &   \(n \geq 4\) even  \\
                            &   \(x \in \PGammaU_n(q) \setminus \PGU_n(q)\)     &   \((n-1)\)-dim. eigenspace   &   \(\abs{G:Y}\) even  \\[10pt]
        \(\PSp_n(q)\)       &    \(x \in \PGSp_n(q) \setminus \PSp_n(q)\)   &   order \(r\), transvection   &   \(q\) odd, \(4 \mid n\)    \\
                            &   \(C_Y(x) \cong \PSp_{n/2}(q^2).2\)   &   &   \(\abs{G:Y}\) even   \\[10pt]
        \(\POmega_n^-(q)\)  &   graph aut.  &   order dividing \(q+1\)  &   \(n/2\) odd \\
                            &   \(C_Y(x) \cong \Omega_{n-1}(q)\) if \(q\) odd   &   no eigenvalue in \(\mathbb{F}_q\)   &      \\
                            &   \(C_Y(x) \cong \Sp_{n-2}(q)\) if \(q\) even &   &   \\[10pt]
        \(\POmega_n(q)\)    &   \(C_Y(x) \cong \POmega_{n-1}^-(q).2\)  &   order \(r\), unipotent  &   \(n \equiv 1 \pmod 4\)  \\
                            &   \(x \in \SO_n(q) \setminus \Omega_n(q)\)    &   \(C_Y(y) \cong E_q^{m(m-1)/2+m}:\Sp_m(q)\)   &   \(\SO_n(q) \leq G\) \\ \bottomrule
        \end{tabular}\vspace{3mm}
        \caption{An excerpt from \cite[Table 1]{Szep2} containing the cases where \(G = C_G(x) C_G(y)\). In all cases \(x\) has order 2.}
        \label{GillGiudiciSpigaTable}
\end{table}

We use the below lemma to avoid some difficulties with small cases in \cref{AlmostSimple}.

\begin{lemma}   \label{computer}
    Suppose that \(G\) is an almost simple group with socle \(\Sp_4(2)'\), \(\G_2(2)'\), \({}^2\!\F_4(2)'\), \({}^2\!\G_2(3)'\) or \(\Omega_n^+(2)\) for \(n \in \{8, 10, 12\}\). Then \(G\) does not have a pair of non-trivial conjugacy classes of coprime sizes.
\end{lemma}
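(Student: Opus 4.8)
The plan is to treat this lemma as a finite verification. By \cref{lem:fund lemma}, if $G$ possessed two non-trivial classes $x^G$, $y^G$ of coprime sizes then $G = C_G(x) C_G(y)$; but for the purposes of this lemma it is cleaner to verify directly that no two distinct non-trivial members of $\cs(G)$ are coprime, equivalently that the common divisor graph $\Gamma(G)$ is complete. Since each of the listed socles $Y$ has a small outer automorphism group, there are only finitely many (and explicitly listed) almost simple groups $G$ with $\soc(G) = Y$, and for each of these the multiset of centraliser orders, and hence $\cs(G)$, is recorded in the literature.

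First I would enumerate the relevant groups. For $Y = \Sp_4(2)' \cong A_6$ we have $\Out(Y) \cong C_2 \times C_2$, so $G \in \{A_6, S_6, \PGL_2(9), M_{10}, \Aut(A_6)\}$; for $Y = \G_2(2)' \cong \PSU_3(3)$ we have $G \in \{\PSU_3(3), \G_2(2)\}$; for $Y = {}^2\!\F_4(2)'$ we have $G \in \{{}^2\!\F_4(2)', {}^2\!\F_4(2)\}$; and for $Y = {}^2\!\G_2(3)' \cong \PSL_2(8)$ we have $G \in \{\PSL_2(8), \PGammaL_2(8)\}$. For the orthogonal socles, $\Out(\Omega_8^+(2)) \cong S_3$ while $\Out(\Omega_n^+(2)) \cong C_2$ for $n \in \{10, 12\}$, so $G$ ranges over $\Omega_n^+(2)$ together with its (finitely many) almost simple extensions inside $\Aut(Y)$.

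For each such $G$ I would read off the multiset of non-trivial conjugacy class sizes and check every pair for coprimality, confirming in each case that any two share a common prime divisor. For the small groups this is a short hand computation: for instance, the non-trivial class sizes of $A_6$ are $40$, $45$, $72$, $90$, and one checks directly that no two of these are coprime; the remaining extensions are dispatched the same way. When $G$ strictly contains $Y$ one must use the class sizes computed in $G$ rather than those of $Y$, taking proper account both of the fusion of $Y$-classes and of the outer classes; these data are available from the relevant character tables.

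The main obstacle is the orthogonal groups $\Omega_n^+(2)$, and in particular $\Omega_{12}^+(2)$, which have a large number of conjugacy classes, so that the pairwise check becomes infeasible by hand. For these I would rely on the stored character tables, using a computer algebra system and its character table library to list the centraliser orders of all classes of each $G$ and to verify that the induced graph on the non-trivial classes is complete. The delicate points are ensuring that every almost simple extension of each socle is included, and that the verification is carried out in $G$ itself rather than merely in $\soc(G)$.
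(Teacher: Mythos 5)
Your proposal is correct and matches the paper's approach: the paper's entire proof is the one-line observation that the claim ``can be verified directly by computing the conjugacy class sizes in {\sc GAP} or {\sc MAGMA}.'' You have simply spelled out the finite verification in more detail (enumerating the almost simple extensions of each socle and checking pairwise coprimality of class sizes, by hand for the small groups and by computer for $\Omega_n^+(2)$), which is exactly what the paper's computation amounts to.
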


\begin{proof}
    This can be verified directly by computing the conjugacy class sizes in {\sc GAP} or {\sc MAGMA}.
\end{proof}

\section{Proof of the main results}

In this section, we present the proofs of \cref{AlmostSimple,thmA,thmB,ThmBcor2}. We begin with \cref{AlmostSimple} as it is the key ingredient in the proofs of the other three.

\begingroup
\def\theintrothm{\ref{AlmostSimple}}
\begin{introthm}   
    Suppose \(G\) is a finite almost simple group. Then \(G\) does not have two non-trivial conjugacy classes of coprime sizes.
\end{introthm}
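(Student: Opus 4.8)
The plan is to argue by contradiction. Suppose $G$ has non-trivial classes $x^G$, $y^G$ with $\abs{x^G}$ and $\abs{y^G}$ coprime. By \cref{lem:fund lemma} this yields the factorisation $G = C_G(x)C_G(y)$ together with the strong local information that, for every $r \in \pi(G)$, at least one of the two centralisers contains a full Sylow $r$-subgroup of $G$. I would split according to whether the socle $Y$ is of the form $\POmega_n^+(q)$ with $n \ge 8$ even. In both cases the strategy is identical: first pin down the possibilities for $x$ and $y$, then show that $\abs{x^G}$ and $\abs{y^G}$ always share a prime divisor, contradicting coprimality. The finitely many genuinely small configurations (socles $\Sp_4(2)'$, $\G_2(2)'$, ${}^2\!\F_4(2)'$, ${}^2\!\G_2(3)'$ and $\Omega_n^+(2)$ for $n\in\{8,10,12\}$) are removed at the outset via \cref{computer}.

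When $Y \ne \POmega_n^+(q)$, \cref{GillGiudiciSpiga} reduces the problem to the five rows of \cref{GillGiudiciSpigaTable}, in each of which $x$ is an involution with an explicitly described reductive centraliser while $y$ is either semisimple or a long root (unipotent) element. Writing $q = p^f$, my first candidate for a common prime is the defining characteristic $p$. For the involution $x$ the listed centraliser ($\PGSp$-type, $\Sp_{n/2}(q^2)$-type, $\Omega_{n-1}$- or $\Sp_{n-2}$-type) always has $p$-part strictly smaller than $\abs{G}_p$, so $p \mid \abs{x^G}$; and when $y$ is semisimple and non-central its centraliser is a proper reductive subgroup with fewer positive roots, whence $p \mid \abs{y^G}$ as well. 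This settles the rows with socle $\PSL_n(q)$, $\PSU_n(q)$ and $\POmega_n^-(q)$. In the two remaining rows $y$ is a long root element, so $C_G(y)$ contains a Sylow $p$-subgroup and $p \nmid \abs{y^G}$; here I would instead read off a common prime from the cyclotomic factors of the two class sizes — for instance both $\abs{x^G}$ and $\abs{y^G}$ are even, or share a primitive prime divisor of $q^2-1$ — and push the small residual parameter values to \cref{computer}.

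The serious obstacle is the excluded family, $Y = \POmega_n^+(q)$ with $n \ge 8$ even, where \cref{GillGiudiciSpiga} gives no information and a direct factorisation analysis is required. Here I would exploit the Sylow condition of \cref{lem:fund lemma} at $r = p$: after possibly swapping $x$ and $y$, the centraliser $C_G(x)$ contains a Sylow $p$-subgroup, hence centralises a maximal unipotent subgroup $U$ of $Y$. This forces $x$ to be, up to its central and diagonal part, a long root element, and so $C_G(x)$ lies inside the stabiliser $P$ of a singular $1$-space of the natural module. The factorisation then collapses to $G = P\,C_G(y)$, whence $C_G(y)$ acts transitively on the singular points. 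The crux is to prove that no centraliser of a non-trivial element can be transitive on this set: this is where I expect the real work to lie, and I would carry it out using the known factorisations of orthogonal groups of plus type (equivalently, the classification of subgroups transitive on singular points), together with \cref{lem:is normal} to control $\gen{x^G}$, concluding that $y = 1$ and contradicting our assumption. As throughout, the finitely many small cases over $\mathbb{F}_2$ are dispatched by \cref{computer}.
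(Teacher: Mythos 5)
Your overall skeleton matches the paper's: argue by contradiction, extract \(G = C_G(x)C_G(y)\) and the Sylow condition from \cref{lem:fund lemma}, dispatch the socles outside type \(D\) via \cref{GillGiudiciSpiga}, treat \(\POmega_n^+(q)\) separately, and use \cref{computer} for the small cases. Your treatment of \cref{GillGiudiciSpigaTable} is a workable variant of the paper's: the paper instead first forces the Sylow-centralising element \(y\) into \(Y\) (this uses \cite[Lemma D.25]{PPSS}, a step your sketch omits and does need) and into \(Z(R\cap Y)\), which kills rows 1, 2, 4 and 5 at a stroke, leaving only the \(\PSp_n(q)\) row, where the contradiction is exactly your parity observation (both class sizes even). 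Your characteristic-\(p\) divisibility argument does handle rows 1, 2 and 4, but your classification of row 5 is wrong in detail: there \(y\) is \emph{not} a long root element (its centraliser \(E_q^{m(m-1)/2+m}{:}\Sp_m(q)\) has deficient \(p\)-part for \(m>2\), which is why the paper can exclude it as not central in a Sylow \(r\)-subgroup), so your \(p\)-divisibility argument in fact applies to that row rather than failing on it; this slip is repairable.

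The genuine gap is in the \(\POmega_{2m}^+(q)\) case, which is the heart of the theorem. Two problems. First, a long root element of \(D_m\) has \([V,\widehat x]\) a totally singular \(2\)-space, so the forced factorisation is \(G = \Stab_G(W)\,C_G(y)\) with \(W\) a totally singular \(2\)-space, i.e.\ \(C_G(y)\) transitive on totally singular \(2\)-spaces, not on singular points; this changes which factorisations are relevant. Second, and decisively, the ``classification of subgroups transitive on singular points'' that you invoke to finish does not exist in the form you need: Liebeck--Praeger--Saxl classify only \emph{maximal} factorisations, and drilling down from maximal overgroups to arbitrary factorisations with a centraliser factor is precisely the analysis that Gill--Giudici--Spiga carried out for the other socles and explicitly could not complete for \(\POmega_{2m}^+(q)\) (triality creates extra factorisations and extra classes of elements, e.g.\ centralisers such as \(\Omega_7(q)\), \(\G_2(q)\) and \({}^3\!\D_4(q^{1/3})\) in the \(D_4\) case, and subfield centralisers of field automorphisms, all of which would have to be shown non-transitive). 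In other words, your plan reduces the excluded case of \cite{Szep2} back to the excluded case of \cite{Szep2}. The paper fills this hole with a bespoke argument that your sketch has no substitute for: it shows the second element lies outside \(Y\) (via \cref{CollectionLemma} and Arad--Fisman), constructs three primitive-prime-divisor subgroups (for \(q^{2m-2}-1\), \(q^{2m-4}-1\), \(q^m-1\)) that cannot lie in \(C_Y(y)\) and hence sit inside \(C_G(x)\), eliminates field, graph-field and order-\(3\) triality automorphisms using \cref{lem:is normal} and centraliser orders from \cite{GLS3}, lifts to the conformal group to show the remaining candidate is a \(2\)-element of order at least \(4\) not stabilising a totally singular \(2\)-space, and then rules out every possibility case-by-case against the involution classes in \cite[\S 3.5.2]{BG}. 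Without an argument of comparable substance, your final step ``conclude \(y=1\)'' is an assertion, not a proof.
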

\addtocounter{introthm}{-1}
\endgroup  

\begin{proof}
    Suppose that the statement is false, so $G$ is an almost simple group not covered by \cref{computer} and we have non-trivial elements \(x\), \(y \in G\) such that \((\abs{x^G}, \abs{y^G}) = 1\). Then $G = C_G(x)C_G(y)$ by \cref{lem:fund lemma}. Since $C_G(z) \le C_G(z^n)$ for any $n \in \mathbb{N}$ and $z\in G$,  we may assume that $x$ and $y$ have prime order. By \cite[Theorem B]{Szep1}, $G$ is not a simple group. Let $Y=F^*(G)$ be the generalised Fitting subgroup of \(G\). By \cite[Theorem 1.1 and Table 1]{Szep2}, \(Y\) is a group of Lie type defined in some characteristic, \(r\) say. Let $R \in \Syl_r(G)$. By \cref{lem:fund lemma}, we may suppose that $y$ centralizes $R$. Then, because of \cref{computer}, we may apply \cite[Lemma D.25]{PPSS}  to obtain  $y \in Y$.  Thus $y \in C_Y(R\cap Y)$.  Since $R \cap Y = O_r(N_{Y}(R\cap Y)) = F^*(N_{Y}(R \cap Y)) $ by \cite[Corollary 3.1.4]{GLS3}, we have  $C_Y(R\cap Y) = C_Y(F^*(N_Y(R \cap Y)))=Z(R\cap Y)$, we have that $C_Y(R) \le Z(R\cap Y)$. Therefore \(y \in Z(R)\cap Y \le Z(R \cap Y)\). Furthermore, by \cref{CollectionLemma} we have that \(x \nin Y\). 
% gill-giudici-spiga to our table legend: 6=1 9=2 10=3 11=4 14=5
    
    If $Y \ncong \POmega_{2m}^{+}(q)$ with $m \geq 4$, then \cref{{GillGiudiciSpiga}}  provides a list of the possibilities for an almost simple group $H$ and non-trivial elements $a$, $b \in H$ satisfying $H=C_H(a)C_H(b)$. There are five families of groups to consider, listed in \cref{GillGiudiciSpigaTable}. We claim that in all cases the class sizes in \cref{GillGiudiciSpigaTable} are not coprime. 
    Since  \(x \in G \setminus Y\) has prime order, \cref{{GillGiudiciSpiga}} implies $x$ has order $2$ and we already know \(y \in Z(R)\le Z(R \cap Y)\). The only case which matches this requirement is line 3 of \cref{GillGiudiciSpigaTable} since in lines 1, 2 and 4 \(y\) is not an \(r\)-element and, in line 5, \(y\) does not lie in the centre of a Sylow \(r\)-subgroup. In the situation of line 3, $r$ is odd, $Y \cong \PSp_{2m}(q)$ with $m$ even and $y$ is the image of a transvection on the natural module $V$ for $\Sp_{2m}(q)$. Since $\Sp_{2m}(q)$ acts transitively on the $1$-dimensional subspaces of $V$, and since the number of these subspaces is even, we conclude that $C_G(y)$ has even index in $G$. Hence $C_G(x)$ contains a Sylow $2$-subgroup of $G$ by \cref{lem:fund lemma}. However, from line 3 of \cref{GillGiudiciSpigaTable} we see that \(C_Y(x) \cong \PSp_m(q^2).2\) and can directly compute that this has even index in \(Y\) and thus $C_G(x)$ has even index in \(G\), a contradiction. We conclude that \(Y \cong \POmega_{2m}^+(q)\) for $m\geq 4$ and \(y \in Z(R) \leq Z(R\cap Y)\). 
    
    Define $M= C_Y(y)$ and note that \(R \leq C_G(y)\). If $x$ has order $r$, then we may assume that $x\in R$. But then $x$ centralises \(y\) and we obtain $Y\le \gen{x^G}\le N_G(\gen{y})$ from \cref{lem:is normal}. Since $\gen{y}$ is not normal in $Y$, this is impossible. Hence $x$ does not have order $r$.

    Let \(\widehat H = \CO_{2m}^+(q)\) be the conformal orthogonal group of $+$ type with natural module \(V\) and \(\widehat Y = \Omega_{2m}^+(q) \leq \widehat H\). Notice that taking a preimage $\widehat y$ of $y$ in $\widehat H$ of order $r$ we obtain a decomposition $V=[V,\widehat y] + C_V(\widehat y)$ with $W=[V,\widehat y]$ a totally singular $2$-space (see, for example \cite[Lemma 14.33 vi)]{ParkerRowleySymplecticAmalgams}). In particular, $\widehat M$ stabilises a totally singular $2$-space of $V$.     
    
    In what follows, note that we use \cref{computer} to sidestep the fact that $2^6-1$ has no primitive prime divisor \cite{Zsigmondy}. Let \(M_2 \leq \widehat Y\) be such that \(M_2 \cong \Omega_{2m-2}^-(q)\) and \(M_2\) fixes pointwise a 2-dimensional subspace \(V_2\) of \(-\) type. Then \(M_2\) contains a cyclic subgroup of order \(q^{m-1}+1\)  acting irreducibly on $V_2^\perp$ (see \cite[Satz 3]{Huppert}). Let \(\alpha\) be a primitive prime divisor of \(q^{2m-2}-1\). Similarly, we have a subgroup \(M_4 \cong \Omega_{2m-4}^-(q)\) of \(\widehat Y\) which fixes pointwise a 4-dimensional subspace \(V_4\) of \(-\) type and contains a cyclic subgroup of order \(q^{m-2}+1\). Let \(\beta\) be a primitive prime divisor of \(q^{2m-4}-1\). Let \(\widehat A \in \Syl_{\alpha}(M_2)\) and \(\widehat B \in \Syl_{\beta}(M_4)\). Then \(\widehat A\) acts irreducibly on \(V_2^{\perp}\) and centralises \(V_2\), and similarly \(\widehat B\) acts irreducibly on \(V_4^{\perp}\) and centralises \(V_4\). In particular, neither \(\widehat A\) nor \(\widehat B\) stabilise a totally singular \(2\)-space. Hence \(M\) does not contain a Sylow \(\alpha\)- or \(\beta\)-subgroup of \(G\). Moreover, $\widehat Y$ has a subgroup isomorphic to $\GL_m(q)$ which, in turn, contains a cyclic subgroup of order $q^m - 1$. Let $\gamma$ be a primitive prime divisor of $q^m - 1$. Then we may choose an element $\widehat w$ of order $\gamma$ which acts with two irreducible composition factors each of dimension $m$ on $V$. In particular, $w$ is not conjugate into $M$. Hence $M$ does not contain a Sylow $\gamma$-subgroup of $Y$. Therefore  we may assume $\gen{A, B, w} \leq C_Y(x)$. 

    Assume for a moment that $2m = 8$ and that $x$ has order $3$. Then, as $x$ does not have order $r$, $r\ne 3$. If $x$ acts as a graph automorphism then $C_Y(x) \cong \G_2(q)$ by \cite[Table 4.7.3A]{GLS3} which does not have order divisible by $\gamma = \beta$. If $x$ acts as a field automorphism, then $x$ normalises a long root subgroup as does $M$ and this contradicts \cref{lem:is normal}. If $x$ acts as a graph-field automorphism then by \cite[Proposition 4.9.1 (a) and (e)]{GLS3} $C_Y(x) \cong {}^3\!\D_4(q^{1/3})$ and has order not divisible by $\gamma$. Thus when \(2m = 8\), $x$ does not have order $3$.
    
    Let $m\geq 4 $ and suppose that $x$ acts on $Y$ as either a field automorphism or a graph-field automorphism. Since \(x\) does not have order \(r\), conjugacy of such automorphisms is the subject of \cite[Proposition 4.9.1]{GLS3}. In particular, we see that $x$ normalises a conjugate of $Z(R \cap Y)$ and this leads to a contradiction via \cref{lem:is normal}.

    Thus $x$ acts as an inner-diagonal or graph automorphism of $Y$. Then, as $x$ does not have order $3$ when $2m=8$, $x$ is an involution. Since $x$ does not have order $r$, we have $r$ is odd. So, conjugating by the triality automorphism if necessary, we may find some \(\widehat G \leq \widehat H\) such that \(\widehat G\) projects onto \(G\) and $\widehat Y = \Omega_{2m}^+(q)$. 
    
    Let \(\widehat x \in \widehat H\) be an element in the preimage of \(x\) chosen to have minimal order. Thus \(\widehat x\) is a \(2\)-element. As \(\alpha\), \(\beta\) and \(\gamma\) are coprime to \(\abs{Z(\widehat H)}\), \(\widehat x\) commutes with \(\widehat A\), \(\widehat B\) and \(\widehat C\). Furthermore, the choice of \(\widehat A\), \(\widehat B\) and \(\widehat C\) implies that \(C_{\widehat Y}(\widehat x)\) acts irreducibly on \(V\). If \(\widehat x\) is an involution, then \(\widehat x \in Z(\widehat H)\), a contradiction. Hence \(\widehat x\) has order at least 4.
    
    Suppose that $\widehat x$ leaves a totally singular 2-space invariant. Then $N_{\gen{\widehat Y, \widehat x}}(W) \cap \widehat x^G \neq \emptyset$, and so \cref{lem:is normal} yields  that $\widehat Y \le \gen{\widehat x^{\widehat {Y}}}$ leaves $W=[V,\widehat y]$ invariant, a contradiction. Hence $\widehat x$ does not leave a totally singular $2$-space invariant. 
     
    We now consider all of the possibilities for \(x\). Since \(x\) is an involution and \(r\) is odd, we may use \cite[Table B.10]{BG} (and the additional description provided in \cite[\nopp 3.5.2]{BG}) for a description of the possibilities for $\widehat x$. Since \(\widehat x\) has order at least 4, we know that it is not one of the involutions listed in \cite[3.5.2.1 to 3.5.2.6, 3.5.2.14]{BG}.
    
    From \cite[\nopp 3.5.2.7, 3.5.2.11 and 3.5.2.15]{BG} we see that $\abs{C_{Y}(x)}$ is not divisible by both $\alpha$ and $\beta$. So these cases cannot be candidates for $\widehat x$. Finally, the description provided in \cite[\nopp 3.5.2.10, 3.5.2.12]{BG} shows that these possibilities for \(\widehat x\) leave a totally singular $2$-space invariant and so these also cannot be $\widehat x$. The involutions listed in \cite[\nopp 3.5.2.16]{BG} are conjugate into $\PGO_{2m}^+(q)$ by the triality automorphism and so these cases can be excluded. This completes the investigation of all possibilities for \(x\) and \(y\), thus \(Y\) cannot be isomorphic to \(\POmega_{2m}^+(q)\).
\end{proof}

We also have the following as a corollary.

\begin{corollary}   \label{thmBcor}
    Suppose that \(G\) is a finite group with a unique minimal normal subgroup \(N\) and that \(x\), \(y\) are non-trivial elements of \(G\). If  $N$ is non-abelian, then  \(\abs{x^G}\) and \(\abs{y^G}\) are not coprime. In particular, \(\Gamma(G)\) is complete. 
\end{corollary}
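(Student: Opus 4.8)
The plan is to deduce the statement from \cref{thmB}, after recording two structural facts: that $N$ equals the normal closure of a single component, and that $C_G(N)=1$.

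First I would set up the structure of $N$. Being a non-abelian minimal normal subgroup, $N$ is a direct product $N = S_1 \times \cdots \times S_k$ of isomorphic non-abelian simple groups, and $G$ permutes $\{S_1,\dots,S_k\}$ transitively under conjugation: the product of the $G$-conjugates of $S_1$ is a non-trivial normal subgroup of $G$ inside $N$, hence equals $N$ by minimality. Thus each $S_i$ is a component of $G$ (a non-abelian simple group is quasisimple, and $S_i \norm N \norm G$ is subnormal), and $\gen{S_1^G} = S_1\cdots S_k = N$. I would then observe that $C_G(N) = 1$: it is normal in $G$ and meets $N$ in $Z(N) = 1$, so were it non-trivial it would contain a minimal normal subgroup, necessarily $N$ by uniqueness, contradicting $N \cap C_G(N) = 1$. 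In particular $Z(G) \le C_G(N) = 1$, so every non-trivial element of $G$ is non-central and therefore has class size strictly greater than $1$.

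Next, arguing by contradiction, I would assume the non-trivial elements $x$ and $y$ satisfy $(\abs{x^G}, \abs{y^G}) = 1$. Applying \cref{thmB} with the component $K = S_1$ shows that $\gen{S_1^G} = N$ is centralised by $x$ or by $y$, i.e. $x \in C_G(N)$ or $y \in C_G(N)$. Since $C_G(N) = 1$, this forces $x = 1$ or $y = 1$, against our hypothesis. Hence $\abs{x^G}$ and $\abs{y^G}$ cannot be coprime, giving the first assertion. For the final clause I would unwind the definition of $\Gamma(G)$: its vertices form $\cs(G) \setminus \{1\}$, and given distinct vertices $a$ and $b$ I would pick $x, y \in G$ with $\abs{x^G} = a$, $\abs{y^G} = b$. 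As $a, b > 1$ both $x$ and $y$ are non-central, hence non-trivial, so the first part yields $(a,b) \neq 1$ and the two vertices are adjacent; thus $\Gamma(G)$ is complete.

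The argument is short once \cref{thmB} is available, so I do not expect a serious obstacle; the only points demanding care are the two reductions — recognising $N$ as the normal closure $\gen{S_1^G}$ of a single component (which relies on transitivity of the conjugation action on the simple factors) and establishing $C_G(N) = 1$ (which uses both the uniqueness and the non-abelianness of $N$). If one wished to avoid invoking \cref{thmB} and instead reduce directly to \cref{AlmostSimple}, the case $k = 1$ is immediate but the case $k > 1$ appears substantially more delicate, so I would prefer the route through \cref{thmB}.
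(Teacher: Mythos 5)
Every individual step in your argument is correct: the decomposition \(N = S_1 \times \cdots \times S_k\), the identification of each \(S_i\) as a component with \(\gen{S_1^G} = N\), the observation \(C_G(N) = 1\) via uniqueness and non-abelianness of \(N\), and the deduction that \(\Gamma(G)\) is complete. The problem is that the proof as a whole is circular within this paper's logical structure. You derive the corollary from \cref{thmB}; but the paper proves \cref{thmB} from \cref{ThmBcor2} (abelianness of \(\gen{x^G} \cap \gen{y^G}\)), and the paper's proof of \cref{ThmBcor2} invokes precisely the corollary you are asked to prove --- it applies it to \(G/C_G(N)\), which has \(NC_G(N)/C_G(N)\) as its unique minimal normal subgroup, to conclude that \(\gen{x^G}\) or \(\gen{y^G}\) centralises each minimal normal subgroup \(N\). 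So \cref{thmB} sits strictly downstream of this corollary, and your argument cannot replace the paper's proof unless you supply an independent proof of \cref{thmB}, which you do not.

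The paper instead argues directly from \cref{AlmostSimple}, and the \(k > 1\) reduction you set aside as ``substantially more delicate'' is exactly the content of its proof, carried out in a few lines. Assuming \((\abs{x^G}, \abs{y^G}) = 1\), one may take \(C_G(x)\) to contain a Sylow \(r\)-subgroup \(R\) of \(N\) for some \(r \in \pi(N)\) by \cref{lem:fund lemma}; since each \(T_i\) meets \(R\) non-trivially, \(x\) normalises every \(T_i\). For \(y\), one observes that if \(y\) centralised no non-trivial Sylow subgroup of \(N\), then \(C_G(x)\) would contain a Sylow \(t\)-subgroup of \(N\) for every \(t \in \pi(N)\), forcing \(x \in C_G(N) = 1\); hence \(y\) centralises some Sylow subgroup of \(N\) and likewise normalises every \(T_i\). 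Then \(x, y \in M = \bigcap_i N_G(T_i) \norm G\), coprimality of class sizes descends to \(M\) by \cref{CollectionLemma}, and after replacing \(x\) and \(y\) by conjugates centralising neither \(T_1\) nor \(T_1\) respectively, the projection \(\theta \colon M \to \Aut(T_1)\) produces (again via \cref{CollectionLemma}) an almost simple group with two non-trivial conjugacy classes of coprime sizes, contradicting \cref{AlmostSimple}. If you want a valid proof, this direct reduction is the route to take; your structural preliminaries (transitivity on the factors, \(C_G(N) = 1\)) remain useful ingredients in it.
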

\begin{proof}
    Suppose for a contradiction that \(\abs{x^G}\) and \(\abs{y^G}\) are coprime. Write \(N = T^s = T_1 \times \cdots \times T_s\) for \(s \geq 1\) and \(T\) a non-abelian simple group. Let \(r \in \pi(N)\). By \cref{lem:fund lemma}, we may assume that \(C_G(x)\) contains \(R \in \Syl_r(N)\). Since each \(T_i\) intersects \(R\) non-trivially we see that \(x\) must normalise each \(T_i\). Assume that \(y\) does not centralise any non-trivial Sylow subgroup of \(N\). Then for any prime \(t \in \pi(N)\) we have that \(C_G(x)\) contains a Sylow \(t\)-subgroup of \(N\). In particular, \(x \in C_G(N) = 1\), a contradiction. Therefore for some \(t \in \pi(N)\) we have that \(y\) centralises a Sylow \(t\)-subgroup of \(N\) and so \(y\) normalises every \(T_i\).

    Let \(M = \bigcap_{i=1}^s N_G(T_i) \norm G\). Then \(x\), \(y \in M\) and \(\abs{x^M}\) and \(\abs{y^M}\) are coprime by \cref{CollectionLemma}. By replacing $x$ and $y$ by suitable $G$-conjugates if necessary, neither $x$ nor $y$ centralises $T_1$. Let \(\theta\) be the projection from \(M\) to \(\Aut (T_1)\). Then $\abs{\theta(x)^{\theta(M)}}$ and $\abs{\theta(y)^{\theta(M)}}$ are coprime by \cref{CollectionLemma}. This contradicts \cref{AlmostSimple}. 
\end{proof}

With this corollary in mind, we can now prove \cref{ThmBcor2}.

\begingroup
\def\theintrothm{\ref{ThmBcor2}}
\begin{introthm}
    Suppose that $G$ is a finite group and $x$, $y \in G$ are in conjugacy classes of coprime sizes. Then the normal subgroup $\gen{x^G}\cap \gen{y^G}$ is abelian. Furthermore, we can write 
    $$\gen{x^G}\cap \gen{y^G}= (\gen{x^G}\cap Z(\gen{y^G}))(Z(\gen{x^G}) \cap \gen{y^G}).$$
\end{introthm}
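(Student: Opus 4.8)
Write $A=\gen{x^G}$, $B=\gen{y^G}$ and $D=A\cap B$, which is normal in $G$ since $A,B\norm G$. The plan is to deduce both assertions from the single fact that $D$ is \emph{nilpotent}. First note that the factorisation already forces abelianness, so it is the real content: $A\cap Z(B)$ lies in $B$ and centralises $B\supseteq D$, while $Z(A)\cap B$ lies in $A$ and centralises $A\supseteq D$; each is therefore a subgroup of $D$ contained in $C_G(D)$, hence in $Z(D)$. Thus if $D=(A\cap Z(B))(Z(A)\cap B)$ then $D\le Z(D)$ is abelian, and it suffices to establish the displayed factorisation.

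Granting that $D$ is nilpotent, I would finish as follows. For each prime $r$ the Sylow $r$-subgroup $D_r$ is then characteristic in $D$, hence normal in $G$, so $D_r\le O_r(G)$ lies in every Sylow $r$-subgroup of $G$. By \cref{lem:fund lemma} one of $C_G(x)$, $C_G(y)$ contains a Sylow $r$-subgroup of $G$ and so centralises $D_r$; say $x$ does. As $D_r\norm G$, every conjugate $x^g$ centralises $D_r^{g^{-1}}=D_r$, so $A=\gen{x^G}$ centralises $D_r$, giving $D_r\le Z(A)\cap B$ (using $D_r\le D\le A\cap B$); symmetrically, if $y$ centralises $D_r$ then $D_r\le A\cap Z(B)$. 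Grouping the primes accordingly and using $D=\prod_r D_r$, I get $D\le(A\cap Z(B))(Z(A)\cap B)\le D$, which is the required factorisation (and abelianness, by the previous paragraph).

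It remains to show $D$ is nilpotent, which I would prove by induction on $\abs{G}$. If $D=1$ there is nothing to do; otherwise pick a minimal normal subgroup $N$ of $G$ with $N\le D$. The crucial step is to exclude a non-abelian $N$. If $x$ centralised such an $N$, then (as in the previous paragraph, via $N\norm G$) all of $A$ would centralise $N$, forcing the non-abelian $N\le Z(A)$, a contradiction; so neither $x$ nor $y$ centralises $N$. Passing to $\overline{G}=G/C_G(N)\le\Aut(N)$, the image $\overline{N}\cong N$ is the \emph{unique} minimal normal subgroup of $\overline{G}$ (any other would centralise $\overline{N}$, but $C_{\overline{G}}(\overline{N})=1$ because $Z(N)=1$), while $\overline{x}$, $\overline{y}$ are non-trivial with coprime class sizes by \cref{CollectionLemma}; this contradicts \cref{thmBcor}. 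Hence $N$ is an elementary abelian $p$-group. Now \cref{lem:fund lemma} applied to $p$ shows that $x$ or $y$ centralises a Sylow $p$-subgroup of $G$, which contains $N$ as $N\le O_p(G)$; arguing again via $N\norm G$, the corresponding subgroup $A$ or $B$ centralises $N$, so $N\le Z(D)$. Since $N\le D$, we have $\gen{(xN)^{G/N}}\cap\gen{(yN)^{G/N}}=D/N$, so by induction $D/N$ is nilpotent; as $N\le Z(D)$ and a central extension of a nilpotent group is nilpotent, $D$ is nilpotent.

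The main obstacle is precisely the exclusion of a non-abelian minimal normal subgroup $N\le D$: all the rest is a routine induction together with the elementary Sylow bookkeeping powered by \cref{lem:fund lemma}, whereas ruling out non-abelian $N$ is exactly where the classification-dependent input enters, through \cref{AlmostSimple} and its consequence \cref{thmBcor}.
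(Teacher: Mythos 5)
Your proof is correct. Every step checks: the exclusion of a non-abelian minimal normal subgroup \(N \le D\) via the embedding \(G/C_G(N) \hookrightarrow \Aut(N)\) is sound (distinct minimal normal subgroups centralise each other, and \(C_{\Aut(N)}(\Inn(N)) = 1\) since \(Z(N)=1\)), the induction is legitimate (class sizes in \(G/N\) stay coprime by \cref{CollectionLemma}, and \(\gen{(xN)^{G/N}} \cap \gen{(yN)^{G/N}} = D/N\) precisely because \(N \le D\)), and the central-extension and Sylow-bookkeeping steps are standard. Where you diverge from the paper is in how the intermediate structural claim is reached. The paper first proves that \(T = \gen{x^G} \cap \gen{y^G}\) is \emph{soluble} in one global step: it reduces modulo the soluble radical via \cref{CollectionLemma}, applies \cref{thmBcor} through \(G/C_G(N)\) for each minimal normal subgroup \(N\) exactly as you do, and then invokes generalised Fitting subgroup theory — when \(F(G)=1\), the subgroup \(F^*(G)\) is the product of the minimal normal subgroups and is self-centralising, forcing \(T \le C_G(F^*(G)) = 1\). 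It then runs the same prime-by-prime centralisation argument as yours on the subgroups \(O_r(T)\), using \(C_T(F(T)) \le F(T)\) for soluble \(T\) to conclude \(T \le Z(F(T))\). You instead prove the stronger intermediate claim that \(D\) is \emph{nilpotent}, by induction on \(\abs{G}\), disposing of abelian minimal normal subgroups with a central-extension argument; the Sylow-wise factorisation is then identical in substance to the paper's second paragraph. Your route buys elementarity — it avoids \(F^*\)-machinery entirely (no \(F^*(T)=F(T)\), no self-centralising property of the generalised Fitting subgroup), needing only minimal normal subgroups, the \(\Aut(N)\) embedding and upper central series — and it isolates the pleasant formal point that the displayed factorisation by itself implies abelianness. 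The paper's route buys brevity: no induction, with all non-abelian obstructions removed at once by the soluble-radical reduction. Both arguments hinge on the same classification-dependent pivot, namely \cref{thmBcor} (hence \cref{AlmostSimple}) applied through the quotient \(G/C_G(N)\).
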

\addtocounter{introthm}{-1}
\endgroup

\begin{proof} 
    Set $K = \gen{x^G}$ and $L = \gen{y^G}$. We first show that $T = K \cap L$ is soluble. To do this, by \cref{CollectionLemma}, we may assume for the remainder of this paragraph that the largest soluble normal subgroup of $G$ is trivial and show that $T = 1$.  Let $N$ be a minimal normal subgroup of $G$.  Then $N$ is non-abelian as $G$ has no soluble normal subgroups. Hence $NC_G(N)/C_G(N)$ is non-abelian and is the unique minimal normal subgroup of $G/C_G(N)$. Applying \cref{thmBcor} yields either $K \le C_G(N)$ or $L \le C_G(N)$. Hence $T \le C_G(N)$ for all minimal normal subgroups $N$ of $G$. Since $F(G) = 1$, $F^*(G)$ is the product of the minimal normal subgroups of $G$ and so $T \le C_G(F^*(G)) = Z(F^*(G)) = Z(F(G)) = 1$.
    
    We now show that $T$ is abelian. Since $T$ is soluble, $F^*(T)= F(T)$. Let $r\in \pi(F(T))$. Then, by \cref{lem:fund lemma} either $x$ or $y$ centralises $O_r(T)$. Since $O_r(T)$ is normal in $G$, we have that $O_r(T)$   is centralised by $K$ or by $L$ and so is centralised by $T$. Since $F(T)=\gen{O_r(T)\mid r\in \pi(t)}$, we deduce that $T\le C_T(F(T))=Z(F(T))$ is abelian, and, in fact $T= (T \cap Z(K))(T \cap Z(L))= (K \cap Z(L))(L\cap Z(K))$.
\end{proof}

\cref{ThmBcor2} has the following direct consequence. 

\begin{corollary}\label{cor:new} 
    Suppose that $G$ is a finite group and $x$, $y \in G$ are in conjugacy classes of coprime sizes. If $\gen{x^G} \ge \gen{y^G}$, then $\gen{y^G}$ is abelian.
\end{corollary}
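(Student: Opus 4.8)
The plan is to observe that this statement is an immediate specialisation of \cref{ThmBcor2}, so no genuinely new argument is required. Writing $K = \gen{x^G}$ and $L = \gen{y^G}$, the hypothesis is precisely that $L \leq K$. First I would record the elementary set-theoretic identity that a containment $L \leq K$ forces $K \cap L = L$. Then, since $x$ and $y$ lie in conjugacy classes of coprime sizes, \cref{ThmBcor2} applies verbatim to give that $K \cap L$ is abelian. Combining these two observations yields that $L = K \cap L$ is abelian, which is exactly the assertion.

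There is essentially no obstacle here: the entire substantive content has already been established in \cref{ThmBcor2}, and the corollary merely restates that result under the additional comparability assumption $\gen{y^G} \leq \gen{x^G}$, which collapses the intersection onto the smaller subgroup. The only thing to verify is that the coprimality hypothesis needed to invoke \cref{ThmBcor2} is already present in the statement, which it is. I would therefore keep the proof to a single short sentence, explicitly naming $K$ and $L$, noting $K \cap L = L$, and citing \cref{ThmBcor2} for abelianness; the decomposition half of \cref{ThmBcor2} is not needed for this corollary and can be left unmentioned.
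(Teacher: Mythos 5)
Your proof is correct and matches the paper's route exactly: the paper states \cref{cor:new} as a direct consequence of \cref{ThmBcor2} with no further argument, and your observation that $\gen{y^G} \leq \gen{x^G}$ gives $\gen{x^G} \cap \gen{y^G} = \gen{y^G}$, which is abelian by \cref{ThmBcor2}, is precisely that intended one-line deduction.
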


Before we prove \cref{thmB}, we give an example which provides an instance of \cref{cor:new} yet shows that we cannot expect to show that \(\gen{x^G} \cap \gen{y^G}\) is central in both \(\gen{x^G}\) and \(\gen{y^G}\).
 
\begin{example}
    Suppose that $G= \mathrm{Alt}(4)$. Then the elements of order $2$ are in a conjugacy class of size $3$ and the elements of order $3$ are in  conjugacy classes of size $4$.
\end{example}

\begingroup
\def\theintrothm{\ref{thmB}}
\begin{introthm}    
    Suppose that \(G\) is a finite group and \(x\), \(y \in G\) lie in conjugacy classes of coprime sizes. Assume that $K$ is a component of $G$ or $K=O_r(G)$ for some prime $r$. Then \(\gen{K^G}\) is centralised by \(x\) or by \(y\). In particular, if \(G\) is almost simple then \(x = 1\) or \(y = 1\).
\end{introthm}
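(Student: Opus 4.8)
The plan is to treat the two possibilities for $K$ separately, noting that in both cases $\gen{K^G}$ is the normal closure of $K$ and hence a normal subgroup $N \norm G$. If $K = O_r(G)$ the conclusion is immediate: here $N = O_r(G)$, and \cref{lem:fund lemma} applied to the prime $r$ places a Sylow $r$-subgroup $R$ of $G$ inside $C_G(x)$ or $C_G(y)$; since $O_r(G) \le R$, that element centralises $N$. So suppose $K$ is a component. Then $N$ is the central product of the $G$-conjugates $K_1, \dots, K_s$ of $K$, each quasisimple, with $\bar N := N/Z(N) \cong \bar K_1 \times \cdots \times \bar K_s$ a direct product of isomorphic non-abelian simple groups $\bar K_i := K_i/Z(K_i)$. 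I would first record that, since the Schur multiplier of a finite group has exponent dividing the group order, $\pi(Z(N)) \subseteq \pi(\bar K_1)$ and hence $\pi(N) = \pi(\bar K_1)$. In particular every $t \in \pi(N)$ divides each $\abs{\bar K_i}$, so a Sylow $t$-subgroup of $N$ has non-trivial image in every direct factor of $\bar N$.

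Arguing by contradiction, I would assume that neither $x$ nor $y$ centralises $N$. The observation that replaces the triviality of $C_G(N)$ exploited in \cref{thmBcor} is the following: if $C_G(x)$ contained a Sylow $t$-subgroup of $G$ (hence of $N$) for every $t \in \pi(N)$, then $\abs{x^N} = \abs{N : C_N(x)}$ would be coprime to every prime dividing $\abs{N}$, forcing $x$ to centralise $N$. Hence, by \cref{lem:fund lemma}, there is a prime at which $x$ centralises a Sylow subgroup of $N$, and symmetrically one for $y$. Now centralising a full Sylow $t$-subgroup $P$ of $N$, with $t \in \pi(N)$, forces the element to fix each $\bar K_i$ setwise: otherwise a non-trivial element of $P$ with non-trivial image in some $\bar K_i$ would be moved into a different direct factor while being fixed. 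Thus both $x$ and $y$ normalise every $K_i$, i.e. both lie in $M := \bigcap_{i=1}^s N_G(K_i) \norm G$.

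It then remains to mirror the endgame of \cref{thmBcor}. By \cref{CollectionLemma} the sizes $\abs{x^M}$ and $\abs{y^M}$ are coprime, and this persists under replacing $x$, $y$ by $G$-conjugates, as $M \norm G$; since neither centralises $N$, after such a replacement neither centralises $K_1$. Let $\theta \colon M \to \Aut(\bar K_1)$ be the map induced by conjugation, whose image is almost simple with socle $\bar K_1$. As a quasisimple group is perfect and so admits no non-trivial automorphism acting trivially on its central quotient, an element of $M$ centralises $K_1$ exactly when it lies in $\ker\theta$, so $\theta(x)$ and $\theta(y)$ are non-trivial; by \cref{CollectionLemma} their class sizes in $\theta(M)$ are coprime, contradicting \cref{AlmostSimple}. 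The final assertion follows on taking $K = \soc(G)$, which is a component with $\gen{K^G} = \soc(G)$ and $C_G(\soc(G)) = 1$.

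The main obstacle, absent from the simpler \cref{thmBcor}, is that $N = \gen{K^G}$ is in general neither simple nor centre-free: its components may have non-trivial centres and $C_G(N)$ may be non-trivial. This is exactly what forces the three adjustments above — the index argument in place of the identity $C_G(N) = 1$, the control of $\pi(Z(N))$ via Schur multipliers (so that every prime of $N$ is seen in all the simple sections), and the descent from $K_i$ to the simple quotient $\bar K_i$ required to invoke \cref{AlmostSimple}.
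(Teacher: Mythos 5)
Your proof is correct, but it takes a genuinely different route from the paper's. The paper deduces \cref{thmB} in a few lines from \cref{ThmBcor2}: by \cite[(31.4)]{aschbacherfg}, every normal subgroup of \(G\) either centralises the component \(K\) or contains \(\gen{K^G}\); so if neither \(\gen{x^G}\) nor \(\gen{y^G}\) centralised \(K\), the non-abelian quasisimple group \(K\) would lie inside \(\gen{x^G} \cap \gen{y^G}\), which is abelian by \cref{ThmBcor2} --- a contradiction, with normality of \(\gen{x^G}\) then upgrading ``centralises \(K\)'' to ``centralises \(\gen{K^G}\)''. You instead bypass \cref{ThmBcor2} entirely and re-run, in the quasisimple setting, the argument of \cref{thmBcor}: a Sylow-index computation (replacing the identity \(C_G(N)=1\) available there, since here \(Z(N)\) and \(C_G(N)\) may be non-trivial) shows that an element failing to centralise \(N=\gen{K^G}\) forces the other element to centralise a Sylow subgroup of \(N\) at a prime of \(\pi(N)\); the Schur multiplier fact \(\pi(Z(K_i)) \subseteq \pi(\bar K_i)\) guarantees such a Sylow subgroup meets every simple factor of \(N/Z(N)\), so both \(x\) and \(y\) land in \(M=\bigcap_i N_G(K_i)\); and the standard lemma that a perfect group admits no non-trivial automorphism acting trivially on its central quotient lets you identify \(C_M(K_1)\) with \(\ker\theta\) and invoke \cref{CollectionLemma} and \cref{AlmostSimple} in \(\Aut(\bar K_1)\). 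All your steps check out, including the conjugate-replacement (valid since \(G\) permutes the \(K_i\) transitively and \(M \norm G\) preserves the \(M\)-class sizes); the one point left implicit is that \(\theta(M)\) contains \(\Inn(\bar K_1)\), which holds because distinct components commute, so \(K_1 \le N \le M\). As for what each approach buys: the paper's proof is short because Aschbacher's dichotomy packages exactly the component bookkeeping you do by hand, and because \cref{ThmBcor2} is already available; your proof shows that \cref{thmB} rests only on \cref{AlmostSimple} (not on \cref{ThmBcor2} or the soluble-radical analysis in its proof), so in your dependency order \cref{thmB} could be proved before \cref{ThmBcor2}, at the cost of considerably more length. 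The \(O_r(G)\) case and the final almost simple assertion are handled essentially identically in both.
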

\addtocounter{introthm}{-1}
\endgroup

\begin{proof} If $K= O_r(G)$, the result follows from \cref{lem:fund lemma}.
    If \(K\) is a component of $G$, \cite[(31.4)]{aschbacherfg} yields that every normal subgroup of \(G\) either centralises \(K\) or contains \(\gen{K^G}\). In particular, if neither \(\gen{x^G}\) nor \(\gen{y^G}\) centralise \(K\), we must have that \(\gen{K^G} \leq \gen{x^G} \cap \gen{y^G}\). However, \(\gen{x^G} \cap \gen{y^G}\) is abelian by \cref{ThmBcor2}. This contradiction proves that at least one of \(\gen{x^G}\) or \(\gen{y^G}\) centralises \(\gen{K^G}\).
\end{proof}

Given \cref{thmB}, the following observation is immediate.

\begin{corollary}
    Let \(G\) be a finite group and suppose \(x\), \(y \in G\) lie in conjugacy classes of coprime sizes. If \(N\) is a minimal normal subgroup of \(G\) then at least one of \(x\) or \(y\) centralises \(N\).
\end{corollary}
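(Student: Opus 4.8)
The plan is to reduce directly to \cref{thmB} by invoking the structure of minimal normal subgroups. Recall that a minimal normal subgroup \(N\) of a finite group is a direct product \(N = T_1 \times \cdots \times T_s\) of pairwise isomorphic simple groups. I would split the argument according to whether these simple factors are abelian or not.

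First I would treat the case where \(N\) is abelian. Here each \(T_i\) is cyclic of some fixed prime order \(r\), so \(N\) is an elementary abelian \(r\)-group and hence \(N \le O_r(G)\). Applying \cref{thmB} with \(K = O_r(G)\) — for which \(\gen{K^G} = O_r(G)\) since \(O_r(G)\) is already normal in \(G\) — gives that \(O_r(G)\), and therefore its subgroup \(N\), is centralised by \(x\) or by \(y\).

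Next I would treat the case where the \(T_i\) are non-abelian. Then each \(T_i\) is quasisimple and, being normal in \(N\) with \(N \norm G\), is subnormal in \(G\); thus \(T_1\) is a component of \(G\). The key observation is that \(\gen{T_1^G}\) is a non-trivial normal subgroup of \(G\) contained in the minimal normal subgroup \(N\), so by minimality \(\gen{T_1^G} = N\). Now \cref{thmB} applied with \(K = T_1\) yields that \(\gen{T_1^G} = N\) is centralised by \(x\) or by \(y\), completing this case.

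In both cases \(N\) is centralised by \(x\) or by \(y\), as required. I do not expect any genuine obstacle: the content is entirely carried by \cref{thmB}, and the only points needing care are the identification \(\gen{K^G} = O_r(G)\) in the abelian case and the identification \(\gen{T_1^G} = N\) via minimality in the non-abelian case.
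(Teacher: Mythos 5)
Your proof is correct and takes essentially the same approach as the paper: the paper states this corollary as an immediate consequence of \cref{thmB}, and your two-case analysis (an abelian minimal normal subgroup lies in \(O_r(G)\) with \(\gen{O_r(G)^G} = O_r(G)\); a non-abelian one has a simple factor \(T_1\) that is a component of \(G\) with \(\gen{T_1^G} = N\) by minimality) is precisely the routine verification the paper leaves to the reader.
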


We conclude this section by observing that \cref{thmA} follows from \cref{ThmBcor2}.

\begingroup
\def\theintrothm{\ref{thmA}}
\begin{introthm}   
    Let $G$ be a finite group and $\pi$ be a set of primes. If $x$ and $y$ are \(\pi\)-regular elements of $G$ whose conjugacy classes have coprime sizes, then $x^G y^G$ is a \(\pi\)-regular conjugacy class.
\end{introthm}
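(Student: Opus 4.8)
The plan is to deduce this from \cref{lem:not sol sbgrp} by exhibiting a suitable $\pi$-separable subgroup. First I would note that, since $(\abs{x^G},\abs{y^G})=1$, \cref{lem:fund lemma} already gives $x^G y^G = (xy)^G$; hence the product is automatically a single conjugacy class, and it only remains to show that the element $xy$ is $\pi$-regular. (We may also harmlessly assume $\pi \subseteq \pi(G)$, since primes not dividing $\abs{G}$ play no role in $\pi$-regularity.) By \cref{lem:not sol sbgrp} it then suffices to find a $\pi$-separable subgroup $X \le G$ meeting both $x^G$ and $y^G$; the most convenient target is a $\pi$-separable $X$ containing $x$ and $y$ outright.

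Producing such an $X$ is where \cref{ThmBcor2} enters, and this is the crux of the argument. Set $K = \gen{x^G}$ and $L = \gen{y^G}$, so that \cref{ThmBcor2} declares $T = K \cap L$ to be an abelian normal subgroup of $G$. Because $K$ and $L$ are both normal in $G$, their mutual commutator satisfies $[K,L] \le K \cap L = T$; since $x \in K$ and $y \in L$, this forces $[x,y] \in T$, i.e.\ the images of $x$ and $y$ commute in $G/T$. I would then take $X = \gen{x,y}$ and check that it is solvable: the normal subgroup $X \cap T$ of $X$ is abelian, while $X/(X \cap T) \cong XT/T = \gen{xT,yT}$ is abelian precisely because $[x,y] \in T$. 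Thus $X$ is metabelian, so in particular $\pi$-separable.

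With this $X$ the argument concludes at once: $x, y \in X$ give $x^G \cap X \neq \emptyset \neq y^G \cap X$, and \cref{lem:not sol sbgrp} delivers that $xy$ is $\pi$-regular. I do not anticipate a real obstacle in this final step, because essentially all of the difficulty has been absorbed into \cref{ThmBcor2} (and ultimately \cref{AlmostSimple}): the abelianness of $K \cap L$ is exactly the input that makes $\gen{x,y}$ solvable. The only point deserving care is the commutator inequality $[K,L] \le T$, which genuinely uses that $K$ and $L$ are normal in $G$ --- and this is guaranteed here since both are normal closures of single conjugacy classes.
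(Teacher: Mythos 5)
Your proposal is correct and follows essentially the same route as the paper's proof: both extract $[x,y]$ from the abelian normal subgroup $\gen{x^G}\cap\gen{y^G}$ given by \cref{ThmBcor2}, produce a soluble (hence $\pi$-separable) subgroup containing $x$ and $y$, and finish with \cref{lem:not sol sbgrp} together with $x^Gy^G=(xy)^G$ from \cref{lem:fund lemma}. The only cosmetic difference is that you show $\gen{x,y}$ itself is metabelian via $T=\gen{x^G}\cap\gen{y^G}$, whereas the paper passes to the largest soluble normal subgroup $S$ and takes $X=\gen{x,y}S$; both choices of $X$ serve the lemma equally well.
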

\addtocounter{introthm}{-1}
\endgroup

\begin{proof}
    Assume $x$, $y \in G$ have coprime class sizes and $x$ and $y$ are $\pi$-regular for a set of primes $\pi$. Then $[x,y] \in S$ where $S$ is the largest soluble normal subgroup of $G$ by \cref{ThmBcor2}. Hence $\gen{x,y}S/S$ is abelian and $\gen{x,y}S$ is soluble. Now \cref{lem:not sol sbgrp} shows that $x^Gy^G=(xy)^G$ is a $\pi$-regular conjugacy class. This proves \cref{thmA}.
\end{proof}

\section{Some consequences of \texorpdfstring{\cref{thmA}}{Theorem D}}  \label{sec:corollaries}

As mentioned in the Introduction, for $p$ a prime, several published results concerning graphs associated with $p$-regular conjugacy classes of a group have been proved assuming that the group is $p$-soluble. In many cases, this assumption was used only to ensure that the product of two $p$-regular conjugacy classes of coprime sizes is itself a $p$-regular conjugacy class. Now, as a consequence of \cref{thmA}, these results can be extended. In this final section, we collect the corresponding generalisations.

In \cite{LZ}, Lu and Zhang introduced the graph $\Delta_p(G)$ on the set of $p$-regular conjugacy class sizes of a finite group $G$. The vertices of $\Delta_p(G)$ are the prime numbers that divide the size of some $p$-regular conjugacy class, and an edge joins two vertices $r$ and $s$ whenever their product $rs$ divides the size of some $p$-regular conjugacy class. In \cite{bipartite}, Iranmanesh and Praeger observe that there is a close connection between $\Delta_p(G)$ and $\Gamma_p(G)$ which is illuminated by considering the bipartite divisor graph. This graph has vertices the disjoint union of the vertices of $\Delta_p(G)$ and $\Gamma_p(G)$, and a prime $r$ is joined to $|x^G|$ if $r$ divides $|x^G|$. Consideration of this graph makes it clear that \(\Delta_p(G)\) and \(\Gamma_p(G)\) have the same number of connected components and the diameters of these connected components differ by at most 1 (this has previously been noted by Lewis \cite[Corollary 3.2]{Lewis}).
 
This first corollary is an extension of the main results in \cite{BF1, BF2, LZ} and, in particular, generalises \cite[Theorem 4]{BF1} and \cite[Theorem A]{BF2} for $\Delta_p(G)$, and \cite[Theorems 1,2,3]{BF1} for  $\Gamma_p(G)$.

 \begin{corollary}\label{cor:1}
    Let $G$ be a finite group and $p$ be a prime. Then the graphs $\Delta_p(G)$ and $\Gamma_p(G)$ are either connected with diameter at most three or have exactly two connected components, both of which are complete.  
\end{corollary}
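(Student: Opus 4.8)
The plan is to recognise the statement as the common-divisor-graph analogue, for arbitrary finite groups, of theorems established in \cite{BF1,BF2,LZ} under the hypothesis that $G$ is $p$-soluble. In each of those references the $p$-solubility assumption enters the argument at exactly one point, namely to guarantee that the product of two $p$-regular conjugacy classes of coprime sizes is again a single $p$-regular class (this is \cite[Lemma~1(2)]{LZ}). Since \cref{thmA} supplies precisely this statement for every finite group, the strategy is to re-run those proofs, substituting \cref{thmA} wherever $p$-solubility was previously invoked. I would first treat one of the two graphs---say $\Gamma_p(G)$---and then transfer the conclusion to the other using the bipartite divisor graph of Iranmanesh and Praeger \cite{bipartite}, which forces $\Delta_p(G)$ and $\Gamma_p(G)$ to have the same number of connected components and diameters differing by at most one (cf. \cite[Corollary~3.2]{Lewis}); the residual discrepancy in the diameter bound is then absorbed by running the (now characteristic-free) path-shortening argument directly for each graph.

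The concrete combinatorial engine is the following. If $x$, $y$ are $p$-regular with $(\abs{x^G},\abs{y^G})=1$, then by \cref{thmA} the element $z=xy$ is $p$-regular and $z^G=x^Gy^G$; moreover, since $C_G(x)\cap C_G(y)\le C_G(z)$ and the two centralisers have coprime index, $\abs{z^G}$ divides $\abs{x^G}\abs{y^G}$, so every prime dividing $\abs{z^G}$ lies in $\pi(\abs{x^G})\cup\pi(\abs{y^G})$. With this in hand the proof splits into three parts. First, to bound the number of components by two, I would assume three pairwise non-adjacent (hence pairwise coprime) class sizes and use the product construction together with the Sylow dichotomy of \cref{lem:fund lemma} to produce a class size linking two of the three components, a contradiction. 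Second, for the connected case I would take two vertices at maximal distance and, when they are coprime, interpose the product class $z^G$ (whose prime divisors are confined to the union of the two prime sets) to exhibit a path of length at most three. Third, assuming exactly two components, I would show each is complete: a pair of non-adjacent vertices inside one component, combined with any vertex of the other, again contradicts the component structure via \cref{thmA}.

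I expect the main obstacle to be bookkeeping rather than any new idea: one must choose the class representatives so that their sizes are genuinely coprime (not merely sharing the targeted primes), control the degenerate possibility $\abs{z^G}=1$ (that is, $xy$ central), and verify that the diameter and completeness bounds survive the passage between $\Delta_p(G)$ and $\Gamma_p(G)$ through the bipartite divisor graph. All of these are already handled in \cite{BF1,BF2,LZ} for $p$-soluble groups, and since the sole ingredient there dependent on solubility has been replaced by \cref{thmA}, the principal remaining task is to confirm, line by line, that no other use of $p$-solubility is hidden in those arguments.
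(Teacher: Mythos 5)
Your proposal is correct and takes essentially the same approach as the paper: the published proof likewise consists of re-running the arguments of \cite{LZ} and \cite{BF1,BF2}, replacing the single use of $p$-solubility (the coprime-product lemma \cite[Lemma 1 (2)]{LZ}) with \cref{thmA}. Your more detailed sketch of the component-counting, diameter, and completeness arguments, and of the $\Delta_p(G)$--$\Gamma_p(G)$ transfer via the bipartite divisor graph, simply makes explicit what those cited proofs already contain.
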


\begin{proof}
    It is sufficient to adapt the proof of \cite[Lemma 1]{LZ} and all the results in \cite{BF1, BF2}, taking into account \cref{thmA}.
\end{proof}

We also provide a generalisation of \cite[Theorem 2]{BF5} to use in the proofs of \cref{cor_5,cor_discon}.

\begin{corollary}\label{cor:new cor} 
    Suppose that $G$ is a finite group. Let $B$ be a $p$-regular conjugacy class of maximal size and write 
        $$M = \gen{D \mid D \text{ is a } p\text{-regular conjugacy class and } (\abs{D},\abs{B}) = 1}.$$
    Then $M$ is an abelian $p'$-subgroup of $G$. Furthermore, $Z_{p'}=Z(G) \cap O_{p'}(G) \le M$ and $\pi(M/Z_{p'}) \subseteq \pi (\abs{B})$.
\end{corollary}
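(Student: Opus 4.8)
The plan is to prove the three assertions of the statement in turn: the inclusion $Z_{p'}\le M$, then that $M$ is abelian (from which being a $p'$-group is automatic), and finally the prime bound $\pi(M/Z_{p'})\subseteq\pi(\abs{B})$. The inclusion $Z_{p'}\le M$ is essentially free: any $z\in Z_{p'}=Z(G)\cap O_{p'}(G)$ is a $p$-regular central element, so its class $\{z\}$ has size $1$, trivially coprime to $\abs{B}$, and hence $z$ is one of the generators of $M$.

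For the abelian assertion, fix $x\in B$ and let $d$ be a generator, i.e. an element of a $p$-regular class $D=d^G$ with $(\abs{D},\abs{B})=1$. First I would record a \emph{maximality lemma}. By \cref{lem:fund lemma} we have $G=C_G(x)C_G(d)$, and by \cref{thmA} the set $(xd)^G=x^Gd^G$ is a single $p$-regular class; combining $C_G(x)\cap C_G(d)\le C_G(xd)$ with $G=C_G(x)C_G(d)$ and the product formula yields $\abs{C_G(xd)}\le\abs{C_G(x)}$, that is $\abs{(xd)^G}\ge\abs{B}$. As $\abs{B}$ is the maximal $p$-regular class size, this forces $\abs{(xd)^G}=\abs{B}$, so $d$ and $xd$ are $p$-regular with coprime class sizes as well. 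Now I would feed the two coprime pairs $(x,d)$ and $(d,xd)$ into \cref{thmB}: for every component $K$ of $G$ and every $K=O_r(G)$, the subgroup $\gen{K^G}$ is centralised by $x$ or $d$, and also by $d$ or $xd$. Were $d$ to fail to centralise $\gen{K^G}$, then both $x$ and $xd$ would centralise it, whence so would $d=x^{-1}(xd)$, a contradiction. Thus $d$ centralises every component and every $O_r(G)$, so $d\in C_G(F^*(G))=Z(F^*(G))$. Since this holds for every generator, $M\le Z(F^*(G))$, which is abelian; and being generated by $p$-regular (hence $p'$-) elements, the abelian group $M$ is then a $p'$-group.

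For the prime bound, note that $M$ is abelian and normal, hence $M\le F(G)$ and $M=\prod_{q}M_q$ with each $M_q\le O_q(G)$ normal in $G$. It suffices to show $M_q\le Z(G)$ whenever $q\notin\pi(\abs{B})$: then $M_0:=\prod_{q\nmid\abs{B}}M_q$ is a normal $p'$-subgroup lying in $Z(G)$, so $M_0\le Z(G)\cap O_{p'}(G)=Z_{p'}$, and $M/Z_{p'}$ is a quotient of $M/M_0\cong\prod_{q\mid\abs{B}}M_q$. Fix such a prime $q$. Because $q\nmid\abs{B}=\abs{x^G}$, a Sylow $q$-subgroup of $G$ lies in $C_G(x)$ and contains $O_q(G)\ge M_q$, so every $x\in B$ centralises $M_q$; hence $\gen{B}\le C_G(M_q)$.

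The hard part, and the step I expect to be the main obstacle, is upgrading $\gen{B}\le C_G(M_q)$ to $C_G(M_q)=G$, i.e. ruling out elements that move $M_q$. The strategy I would pursue is a contradiction via the maximality lemma. If $M_q$ were non-central then there would be a non-central $q$-element generator $d\in O_q(G)$; every element moving $d$ fails to centralise $O_q(G)$ and therefore lies in a class of size divisible by $q$, while all of $B$ centralises $d$, so such movers occupy non-maximal classes. I would then combine a mover with $x\in B$ through \cref{thmA} and the maximality lemma, aiming to produce a $p$-regular class that either exceeds $\abs{B}$ or shares a prime with $\abs{B}$, against the coprimality of $\abs{D}$ with $\abs{B}$ — contradicting the choice of $B$. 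In effect the target is the equivalent statement that the maximal $p$-regular class size must be divisible by $q$ as soon as $M_q$ is non-central, and controlling these movers is precisely the point at which the $p$-solubility hypothesis is stripped from the argument of \cite[Theorem~2]{BF5} in favour of \cref{thmA,thmB}.
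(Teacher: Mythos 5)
Your first two assertions are handled correctly, and your proof that $M$ is abelian is genuinely different from the paper's: the paper proves \cref{cor:new cor} purely by citation, observing that the proof of \cite[Theorem 2]{BF5} goes through verbatim once \cite[Lemma 1]{LZ} is replaced by \cref{thmA}, whereas you give a self-contained argument, playing the two coprime pairs $(x,d)$ and $(d,xd)$ against each other in \cref{thmB} to force every generator into $C_G(F^*(G))=Z(F^*(G))$. That works, and it is a nice structural alternative to the class-algebra computations in \cite{BF5}. One local slip: the containment $C_G(x)\cap C_G(d)\le C_G(xd)$ together with the product formula gives the \emph{wrong-way} estimate $\abs{(xd)^G}\le\abs{x^G}\abs{d^G}$; the bound you actually need, $\abs{(xd)^G}\ge\abs{B}$, comes simply from $(xd)^G=x^Gd^G\supseteq x^G d$. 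The conclusion $\abs{(xd)^G}=\abs{B}$ is correct once you also use \cref{thmA} to know this class is $p$-regular before invoking maximality.

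The third assertion, however, is not proved, and this is a genuine gap rather than a routine verification left to the reader. Your ``movers'' plan stalls exactly where you fear: an element $g$ with $[g,d]\neq 1$ does satisfy $q\mid\abs{g^G}$, but $g$ need not be $p$-regular and $\abs{g^G}$ need not be coprime to $\abs{B}$, so neither \cref{lem:fund lemma}, \cref{thmA} nor your maximality lemma applies to the pair $(x,g)$, and no contradiction materialises; note also that a $p$-regular class sharing a prime with $\abs{B}$ is not in itself contradictory, since only the generator classes are required to be coprime to $\abs{B}$. The missing idea is a \emph{second} application of \cref{thmA} to the product class itself. Set $C=BD=(xd)^G$, of size $\abs{B}$; then $C$ and $D^{-1}$ again have coprime sizes and are $p$-regular, so $CD^{-1}$ is a single $p$-regular class; since it contains $B$, it equals $B$, giving $BDD^{-1}=B$ and hence $B\gen{DD^{-1}}=B$, so that $B$ is a union of cosets of the normal subgroup $\gen{DD^{-1}}$ and $\abs{\gen{DD^{-1}}}$ divides $\abs{B}$. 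Now run this with a generator $d$ replaced by its $q$-part $d_q$ (a power of $d$, so still $p$-regular with class size dividing $\abs{D}$ and hence coprime to $\abs{B}$; these $q$-parts generate $M_q$): then $DD^{-1}\subseteq M_q$, so $\gen{DD^{-1}}$ is a $q$-group of order dividing $\abs{B}$ with $q\notin\pi(\abs{B})$, forcing $\gen{DD^{-1}}=1$, i.e.\ $d_q\in Z(G)$ and $M_q\le Z_{p'}$, which is exactly $\pi(M/Z_{p'})\subseteq\pi(\abs{B})$. This double-multiplication trick is the content of the proof of \cite[Theorem 2]{BF5} that the paper imports wholesale; without it, or a substitute, your proposal establishes only the first two claims of the statement.
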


\begin{proof} 
    This follows as the proof of \cite[Theorem 2]{BF5} since it only requires \cite[Lemma 1]{LZ} and this holds for an arbitrary finite group by \cref{thmA}.
\end{proof}

A group $G$ is called a \emph{quasi-Frobenius} group if $G/Z(G)$ is a Frobenius group. In this context we refer to the preimages in \(G\) of the Frobenius kernel and of Frobenius complements of \(G/Z(G)\) as the kernel and complements of \(G\), respectively.
 
The next result extends  \cite[Theorem 5]{BF1}. 
\begin{corollary}   \label{cor_5}
    Let $G$ be a finite group and $p$ be a prime. Suppose that $\Gamma_p(G)$ is not connected.
    \vspace*{-3mm}
    \begin{enumerate}[i)]
        \item If $p$ does not divide any vertex of $\Gamma_p(G)$, then $G = P \times H$ where $P \in \Syl_p(G)$ and $H$ is a $p$-complement of $G$ which is a quasi-Frobenius group with abelian kernel and complements.
        \item If $p$ divides some vertex belonging to the connected component which does not contain the largest size of a $p$-regular class, then $G$ is $p$-nilpotent and the normal $p$-complement of $G$ is a quasi-Frobenius group with abelian kernel and complements.
    \end{enumerate}
\end{corollary}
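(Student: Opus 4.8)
The plan is to adapt the proof of \cite[Theorem 5]{BF1} essentially line by line. In that argument the hypothesis of $p$-solubility is invoked only through \cite[Lemma 1(2)]{LZ}, the assertion that the product of two $p$-regular classes of coprime sizes is again a $p$-regular class; since \cref{thmA} now supplies precisely this fact for an arbitrary finite group, every such appeal can be replaced verbatim. The two additional structural inputs required are available in the needed generality: the dichotomy that a disconnected $\Gamma_p(G)$ has exactly two components, each complete, is \cref{cor:1}, and the description of the subgroup generated by the component not containing the largest class is \cref{cor:new cor}.

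First I would record the common setup for both parts. By \cref{cor:1} the nontrivial $p$-regular class sizes split into two families $V_1$, $V_2$, with any two sizes in the same family non-coprime and any two in different families coprime. Fixing a $p$-regular class $B$ of maximal size, say with $\abs{B}$ lying in $V_1$, the subgroup $M$ of \cref{cor:new cor} — generated by the $p$-regular classes whose size is coprime to $\abs{B}$, that is, by the classes with size in $V_2$ together with the central ones — is an abelian normal $p'$-subgroup containing $Z_{p'} = Z(G) \cap O_{p'}(G)$, with $\pi(M/Z_{p'}) \subseteq \pi(\abs{B})$. This $M$ will play the role of the abelian Frobenius kernel, and the remaining task is to exhibit the complements as suitable sections of centralisers of $p$-regular elements realising sizes in $V_1$, the fixed-point-free action following from the mutual coprimeness of the two families; it is in checking that the relevant products of classes stay $p$-regular that \cref{thmA} is used repeatedly.

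The split into cases (i) and (ii) is then governed by where $p$ sits relative to the two components. In case (i) no vertex is divisible by $p$, so every $p$-regular class has $p'$-size; following \cite[Theorem 5]{BF1} one first deduces that a Sylow $p$-subgroup $P$ is central, giving the direct factorisation $G = P \times H$ with $H = O_{p'}(G)$, after which the disconnectedness of $\Gamma_p(G)$, which now coincides with $\Gamma(H)$, forces $H$ to be quasi-Frobenius with abelian kernel and complements. In case (ii) some vertex in the component not containing $\abs{B}$ is divisible by $p$; here the same coprimeness analysis instead produces a normal $p$-complement, so that $G$ is $p$-nilpotent, and the Frobenius reconstruction above then applies inside the $p$-complement.

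The one genuine obstacle is not mathematical novelty but an audit: I must confirm that the proof of \cite[Theorem 5]{BF1} uses $p$-solubility for nothing beyond the coprime-product lemma — in particular that it never silently relies on Hall $\pi$-subgroups, $p$-soluble induction, or a Fitting-subgroup argument peculiar to the soluble setting. Since the two inputs \cref{cor:1} and \cref{cor:new cor} have already been shown to rest solely on \cite[Lemma 1]{LZ}, I expect this audit to pass, so that substituting \cref{thmA} throughout completes the proof with no further work.
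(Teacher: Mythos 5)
Your plan is sound for part ii), and there it matches the paper: one reruns the proof of \cite[Theorem 5]{BF1} taking $K_1 = M$ from \cref{cor:new cor} as the abelian kernel candidate and finishes with \cite[Proposition 3]{BF1}, all coprime-product appeals now being covered by \cref{thmA}. But the audit you defer to at the end — that $p$-solubility enters \cite[Theorem 5]{BF1} only through \cite[Lemma 1]{LZ} — in fact \emph{fails} for part i), and at exactly the point your sketch glosses over. The deduction in case i) that a Sylow $p$-subgroup is a direct factor is \cite[Proposition 2]{BF1}, and its proof uses $p$-solubility beyond the coprime-product lemma: it rests on the existence of Hall $p$-complements, guaranteed by Hall's theorem only in the $p$-soluble setting. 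For an arbitrary finite group, the existence of any $p$-complement $H$ with $G = P \times H$ is precisely what must be proved, and it does not follow formally from \cref{thmA} together with the hypothesis that every $p$-regular class size is a $p'$-number. The paper handles this by replacing \cite[Proposition 2]{BF1} with a genuinely different ingredient, a theorem of Camina \cite[Lemma 1]{C} valid for all finite groups, which states that if $p$ divides no $p$-regular class size then the Sylow $p$-subgroup is a direct factor; only after that does \cite[Proposition 3]{BF1} apply to $H$ as you describe. Your proposal offers no substitute for this step, so part i) is left with a genuine gap.

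A smaller inaccuracy in the same place: you say one "first deduces that a Sylow $p$-subgroup $P$ is central". Since $P$ need not be abelian, centrality is not the right conclusion; the correct statement, and what Camina's lemma delivers, is only that $P$ is a direct factor of $G$.
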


\begin{proof} 
    For part i) we apply a result of Camina \cite[Lemma 1]{C} to see that the Sylow $p$-subgroup $P$ is a direct factor of $G$. This replaces  \cite[Proposition 2]{BF1}. Then application of \cite[Proposition 3]{BF1} proves i). 

    Part ii) follows as in the proof of \cite[Theorem 5]{BF1} using $K_1=M$ of \cref{cor:new cor} and applying \cite[Proposition 3]{BF1}.
\end{proof}

\begin{remark}
    Suppose that $\Gamma_p(G)$ is not connected. Let us denote by $X_1$ and $X_2$ its two (complete) connected components and assume that $X_2$ contains a $p$-regular class of maximal size. In this situation, the previous result shows that when either $p$ does not divide any vertex of $\Gamma_p(G)$, or $p$ divides some vertex in $X_1$, then $G$ is soluble. 

    This leaves the possibility that \(p\) divides some vertex in \(X_2\). This case was analysed in \cite{BF5} where it was proved that if \(G\) is \(p\)-soluble then it is soluble. However, we cannot immediately employ \cref{thmA} to remove the \(p\)-solubility hypothesis in this situation (see \cite[Theorem 9, Corollary 10, Theorem 12]{BF5}) and so whether the solubility conclusion extends to an arbitrary finite group remains an open question.
\end{remark}

The methods and results developed for $\Gamma_p(G)$ in \cite{BF1} have been used by the same authors in \cite{BF4} to study the structure of a $p$-soluble group $G$ in a range of different arithmetic situations for the set of sizes of its $p$-regular conjugacy classes. We state next the corresponding generalisations, removing the $p$-solubility assumption of $G$ by virtue of \cref{thmA}. 

\begin{corollary}   \label{cor_discon}
    Let $G$ be a finite group and $p$ be a prime. Denote by $m$ and $n$ the two largest sizes of $p$-regular conjugacy classes of $G$ and assume $m>n>1$. Suppose further that $(m,n)=1$ and $p$ does not divide $n$. Then $G$ is soluble, and the following hold:
    \vspace*{-3mm}
    \begin{enumerate}[i)]
        \item The set of sizes of the $p$-regular conjugacy classes of $G$ is $\{1, n, m\}$.
        \item A $p$-complement of $G$ is a quasi-Frobenius group with abelian kernel and complements. Furthermore, its set of class sizes is $\{1, n, m_{p'}\}$.
    \end{enumerate}
\end{corollary}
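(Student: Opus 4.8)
The plan is to mirror the argument used for the $p$-soluble case in \cite{BF4}, replacing every appeal to the $p$-solubility hypothesis (which in that paper serves only to guarantee, via \cite[Lemma 1]{LZ}, that a product of two $p$-regular classes of coprime sizes is again a single $p$-regular class) by an appeal to \cref{thmA}, and invoking the structural corollaries \cref{cor:1,cor:new cor,cor_5} already established in this section. Throughout I fix $p$-regular elements $a$ and $b$ with $\abs{a^G}=m$ and $\abs{b^G}=n$. Since $m$ is maximal among $p$-regular class sizes, \cref{cor:new cor} furnishes the abelian normal $p'$-subgroup $M=\gen{D\mid D \text{ is } p\text{-regular with } (\abs{D},m)=1}$ satisfying $Z_{p'}\le M$ and $\pi(M/Z_{p'})\subseteq\pi(m)$. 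As $(m,n)=1$, the entire class $b^G$ lies in $M$, so $b$ commutes with every $p$-regular element whose class size is coprime to $m$.

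The first substantive step is to determine the set of $p$-regular class sizes, proving conclusion (i). By \cref{cor:1}, $\Gamma_p(G)$ is either connected of diameter at most three or the disjoint union of two complete components; since $m$ and $n$ are coprime they are non-adjacent, so if the graph is disconnected they lie in different components, and the component $X_1$ not containing $m$ consists of sizes coprime to $m$, whence the corresponding classes all lie in the abelian $M$. I would first rule out connectivity, and then show $X_1=\{n\}$ and $X_2=\{m\}$: any hypothetical $p$-regular class size $k$ with $k\notin\{1,n,m\}$ satisfies $k<n$, and I would eliminate it by a product-class argument through \cref{thmA} applied to coprime-size pairs, playing the commutativity of $M$ against the maximality of $m$. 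This yields that the $p$-regular class sizes are exactly $\{1,n,m\}$, so $\Gamma_p(G)$ has precisely the two vertices $n$ and $m$ and is therefore disconnected with two (trivially complete) components.

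With the sizes pinned down, $X_1=\{n\}$ and the hypothesis $p\nmid n$ says this vertex is not divisible by $p$. If $p$ divides no vertex I apply \cref{cor_5}(i) to get $G=P\times H$ with $H$ a quasi-Frobenius $p$-complement with abelian kernel and complements; if instead $p$ divides the vertex in $X_1$ I apply \cref{cor_5}(ii) to get that $G$ is $p$-nilpotent with normal $p$-complement of the same shape. In either case $G$ is soluble and a $p$-complement $H$ is quasi-Frobenius with abelian kernel and complements, and its class-size set $\{1,n,m_{p'}\}$ then follows by relating $G$-classes to $H$-classes through this direct-product or $p$-nilpotent structure and passing to $p'$-parts, giving (ii). I expect the main obstacle to be the residual possibility singled out in the remark after \cref{cor_5}, namely that $p$ divides only the vertex $m$ lying in $X_2$: in that configuration neither clause of \cref{cor_5} applies verbatim, so solubility and the quasi-Frobenius structure of $H$ must be extracted directly from the class-size shape $\{1,n,m\}$, and the delicate point is to confirm that within \cite{BF4} the $p$-solubility assumption is genuinely used only through the product-class lemma now supplied by \cref{thmA}.
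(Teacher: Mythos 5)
Your overall strategy---transfer the proof of \cite[Theorem A]{BF4}, replacing each appeal to $p$-solubility via \cite[Lemma 1]{LZ} with \cref{thmA}, and substituting \cref{cor:new cor} for \cite[Theorem 3]{BF4}---is the same as the paper's, and your sketch of conclusion (i) is a reasonable gesture at Steps 1--8 of \cite[Theorem A]{BF4}, which do transfer once \cref{thmA} and \cref{cor:new cor} are in place. However, your route to conclusion (ii) through \cref{cor_5} has a genuine gap exactly where you fear: the hypotheses permit $p \mid m$, i.e.\ $p$ dividing the unique vertex of the component $X_2$ containing the largest $p$-regular class size, and in that configuration neither clause of \cref{cor_5} applies. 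Worse, the remark following \cref{cor_5} records that solubility in precisely that configuration is an \emph{open question} for arbitrary finite groups, so there is no established result to fall back on; your proposed fallback (``extract solubility and the quasi-Frobenius structure directly from the class-size shape $\{1,n,m\}$'') is exactly the hard content and is not carried out. The paper avoids this trap by not routing through \cref{cor_5} at all: it follows the ten steps of \cite[Theorem A]{BF4} directly, where the coprimality $(m,n)=1$ and $p \nmid n$ are exploited internally to obtain solubility.

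Moreover, the ``delicate point'' you defer---whether $p$-solubility enters \cite{BF4} only through the product-class lemma---resolves \emph{negatively}: Steps 9 and 10 of \cite[Theorem A]{BF4} additionally use $p$-solubility to take $p$-complements (Hall $p'$-subgroups) of $C_G(a)$ and of $G$, which need not exist in an arbitrary finite group before solubility is proved. The paper supplies concrete repairs for both: in Step 9 one works with each Sylow $r$-subgroup of $C_G(a)$ for every prime $r \neq p$ instead of taking a $p$-complement, and in Step 10 one omits the opening sentence in which a $p$-complement of $G$ is chosen. Without these two modifications the transfer of the proof of \cite[Theorem A]{BF4} is incomplete, so to repair your proposal you should drop the reduction to \cref{cor_5} for part (ii) and instead mimic Steps 1--10 with the Sylow-subgroup workarounds just described; conclusion (ii) then makes sense because $p$-complements exist once solubility is in hand.
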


\begin{proof}
    The proof runs as in \cite[Theorem A]{BF4} with the following modifications. First of all, 
       \cite[Lemmas 1 and 2]{BF4} are true generally by \cref{thmA}, and the same holds for \cite[Theorem 3]{BF4} which should be replaced by \cref{cor:new cor}. Now Steps 1 to 8 of \cite[Theorem A]{BF4} can be mimicked. Step 9 can be proved without taking a $p$-complement of $\ce{G}{a}$, since one can simply work with each Sylow $r$-subgroup of $\ce{G}{a}$ for each prime $r \neq p$. Finally, Step 10 can also be proved without the initial sentence where a $p$-complement of $G$ is taken.
\end{proof}

\begin{corollary}   \label{cor:2}
    Let $G$ be a finite group and $p$ be a prime. Suppose that the sizes of the $p$-regular conjugacy classes of $G$ are $\{1, m, n\}$, with $(m,n) = 1$. Then $G$ is soluble, and the $p$-complements of $G$ are quasi-Frobenius groups with abelian kernel and complements. Moreover, the set of class sizes of a $p$-complement of $G$ is $\{1, m_{p'}, n_{p'}\}$.
\end{corollary}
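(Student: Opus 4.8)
The plan is to show that the hypotheses place $G$ within the scope of the two disconnectedness results already established, \cref{cor_5,cor_discon}, and then to read off the structure of a $p$-complement. Since $\{1,m,n\}$ has three elements and $(m,n)=1$, I relabel so that $m>n>1$; the two nontrivial $p$-regular class sizes $m$ and $n$ are then the only vertices of $\Gamma_p(G)$, and as they are coprime they are non-adjacent. Thus $\Gamma_p(G)$ is disconnected, with two singleton (hence complete) components $X_2=\{m\}$, containing the maximal size, and $X_1=\{n\}$. Note that $(m_{p'},n_{p'})=1$ and that, since $(m,n)=1$, at most one of $m,n$ is divisible by $p$. I split the argument according to whether $p$ divides the non-maximal vertex $n$.

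If $p\nmid n$, then $m>n>1$ are the two largest $p$-regular class sizes, they are coprime, and $p$ does not divide the smaller one: this is exactly the hypothesis of \cref{cor_discon}. That result immediately yields that $G$ is soluble and that a $p$-complement $H$ is a quasi-Frobenius group with abelian kernel and complements, with $\cs(H)=\{1,n,m_{p'}\}$. As $p\nmid n$ gives $n=n_{p'}$, this reads $\cs(H)=\{1,m_{p'},n_{p'}\}$, as wanted. (This case also subsumes the possibility $p\mid m$, which \cref{cor_discon} permits.)

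If instead $p\mid n$, then $p\nmid m$, and $p$ divides the vertex $n$ lying in the component $X_1$ that does not contain the maximal size. This is the hypothesis of part~ii) of \cref{cor_5}, which gives that $G$ is $p$-nilpotent with normal $p$-complement $H$ a quasi-Frobenius group with abelian kernel and complements; in particular $G$ is soluble. It remains only to identify $\cs(H)$. Here $H=O_{p'}(G)$ contains every $p$-regular element of $G$, and for $x\in H$ a short centraliser computation in $G=H\rtimes P$, where $P\in\Syl_p(G)$, gives $\abs{x^H}=\abs{x^G}_{p'}$. Letting $x$ range over representatives of the $p$-regular classes, whose $G$-sizes are $\{1,m,n\}$, we obtain $\cs(H)=\{1,m_{p'},n_{p'}\}$.

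The main subtlety is the routing of the cases rather than any new argument. When $p$ divides the maximal vertex $m$---necessarily with $p\nmid n$---one must appeal to \cref{cor_discon} and not to \cref{cor_5}: for a general disconnected graph this very configuration, with $p$ dividing a vertex of the component containing the maximal size, is flagged as open for arbitrary finite groups in the remark following \cref{cor_5}, and it is precisely the extra information that there are only two, coprime, nontrivial sizes that brings it within reach of \cref{cor_discon}. One should also record that $\cs(H)$ genuinely has three elements, that is, $m_{p'}$ and $n_{p'}$ are distinct and both exceed $1$; distinctness is immediate from $(m_{p'},n_{p'})=1$, and the non-triviality is guaranteed by the quasi-Frobenius conclusion itself, since a quasi-Frobenius group with abelian kernel and complements has exactly two nontrivial class sizes. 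All of the genuine content is inherited from \cref{cor_5,cor_discon}, and hence ultimately from \cref{thmA}.
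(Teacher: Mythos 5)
Your proposal is correct and takes essentially the same route as the paper, whose proof simply argues as in \cite[Corollary B]{BF4} with \cite[Theorem 5]{BF1} replaced by \cref{cor_5} and \cite[Theorem A]{BF4} replaced by \cref{cor_discon} --- precisely the two results your case split (on whether \(p\) divides the non-maximal size) invokes. You merely make explicit the details the paper delegates to the cited source, namely the computation \(\abs{x^H}=\abs{x^G}_{p'}\) in the \(p\)-nilpotent case and the use of the quasi-Frobenius structure to rule out the degenerate possibility \(n_{p'}=1\), both of which are sound.
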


\begin{proof}
    We can argue as in \cite[Corollary B]{BF4}, but taking into account that \cite[Theorem 5]{BF1} can be replaced by \cref{cor_5}, and \cite[Theorem A]{BF4} by \cref{cor_discon}.
\end{proof}

\begin{corollary}   
    Let $G$ be a finite group and $p$ be a prime. If the set of sizes of non-central $p$-regular conjugacy classes of $G$ is of the form $\{n, n+1, \ldots, n+r\}$ for suitable integers $n$ and $r$, then \(G\) is soluble and one of the following holds:
    \vspace*{-3mm}
    \begin{enumerate}[i)]
        \item $r = 0$, $n = p^a$ for some positive integer $a$, and $G$ has abelian $p$-complements.
        \item $r = 0$, $n = p^aq^b$ where $q \neq p$ is a prime, $a \geq 0$, $b \geq 1$, and $G=PQ \times A$ with $P \in \Syl_p(G)$, $Q \in \Syl_q(G)$ and $A \le Z(G)$. Moreover, if $a = 0$ then $G=P \times Q \times A$.
        \item $r = 1$ and any $p$-complement of $G$ is a quasi-Frobenius group with abelian kernel and complements. Further, if $p$ does not divide $n$ then $G$ is $p$-nilpotent and if, in addition, $p$ does not divide $n+1$ then $G = P \times H$ where $H$ is the $p$-complement of $G$. 
    \end{enumerate}
\end{corollary}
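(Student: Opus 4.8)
The plan is to treat this exactly as the preceding corollaries are treated: the statement is the $p$-regular, $p$-solubility-free version of the known structural theorem on groups whose non-central class sizes form a block of consecutive integers (cf. \cite{BF1,BF2,BF4}), and in the original argument the only intervention of $p$-solubility is through the product lemma, now available unconditionally via \cref{thmA} and its consequences \cref{cor:1,cor_5,cor_discon,cor:2}. Write $\Sigma=\{n,n+1,\dots,n+r\}$ for the set of non-central $p$-regular class sizes; as these sizes all exceed $1$ we have $n\ge 2$, and $\Sigma$ is precisely the vertex set of $\Gamma_p(G)$. The key arithmetic input is that consecutive integers are coprime, and more generally that a prime dividing both $n+i$ and $n+j$ must divide $|i-j|\le r$; in particular $n+i$ and $n+i+1$ are never adjacent in $\Gamma_p(G)$.

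The decisive step is to show $r\le 1$. For this I would apply \cref{cor_discon} to the two largest sizes $n+r-1<n+r$, which are coprime as consecutive integers. Provided $p\nmid n+r-1$, \cref{cor_discon} forces the set of $p$-regular class sizes to be exactly $\{1,n+r-1,n+r\}$, whence $r\le 1$. It then remains only to exclude $r\ge 2$ with $p\mid n+r-1$. In that regime $n+r-1$ and $n+r$ are coprime and so lie in distinct connected components of $\Gamma_p(G)$, with $n+r$ in the component carrying the maximal size; thus $p$ divides a size in the non-maximal component and \cref{cor_5}(ii) shows that $G$ is $p$-nilpotent with a $p$-complement that is quasi-Frobenius with abelian kernel and complements. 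Such a complement has a severely restricted set of class sizes, and I would derive a contradiction with $\lvert\Sigma\rvert\ge 3$ from this, after using \cref{cor:1} to confirm that $\Gamma_p(G)$ is genuinely disconnected throughout this case. This yields $r\le 1$ and, along the way, the solubility of $G$.

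With $r\le 1$ in hand the result splits cleanly. If $r=1$ then $n$ and $n+1$ are coprime, so \cref{cor:2} applies to the size set $\{1,n,n+1\}$: it gives that $G$ is soluble and that every $p$-complement is quasi-Frobenius with abelian kernel and complements, which is the principal assertion of (iii). The refinements then follow by the structural analysis of \cite[Theorem 5]{BF1}, reading off $\Gamma_p(G)$ (whose components are the singletons $\{n\}$ and $\{n+1\}$): \cref{cor_5}(i) gives $G=P\times H$ when $p$ divides neither size, while the case $p\mid n+1$ and $p\nmid n$ yields $p$-nilpotency from the quasi-Frobenius structure. If $r=0$ then all non-central $p$-regular classes share the common size $n$; here I would invoke the classification of groups with a single non-central $p$-regular class size (again $p$-solubility-free by \cref{thmA}, as in \cite{BF2}) to conclude that $G$ is soluble, that $n=p^aq^b$ for a prime $q\ne p$, and to separate the prime-power case (i) from the two-prime case (ii) according to whether $n$ is a power of $p$.

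The main obstacle I anticipate is the reduction $r\le 1$, and within it the residual configuration $r\ge 2$ with $p\mid n+r-1$: this cannot be dispatched by \cref{cor_discon} alone, and ruling it out requires combining the $p$-nilpotent quasi-Frobenius structure furnished by \cref{cor_5}(ii) with a count of the admissible class sizes of such a complement, which is delicate precisely because the component containing the maximal size is the case left open in the remark following \cref{cor_5}. The secondary difficulty is the single-size case $r=0$: the dichotomy (i)/(ii) and the solubility conclusion do not follow from the graph-theoretic corollaries proved above but rest on importing the (now unconditional) structure theorem for groups with one non-central $p$-regular class size.
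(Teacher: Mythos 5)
Your overall strategy is the right one, and it is in fact the paper's: the proof there consists of following the proof of \cite[Theorem C]{BF4} verbatim, substituting the now-unconditional ingredients (\cite[Theorem A]{BF3} for \cite[Theorem 6]{BF4}, \cref{cor:2} for \cite[Corollary B]{BF4}, \cite[Lemma 1]{C} for \cite[Lemma 7]{BF4}, and \cref{cor_5} for \cite[Theorem 5]{BF1}). However, your blind reconstruction of that argument breaks down exactly at the points you flag, and these are genuine gaps rather than routine details. First, the reduction to \(r \leq 1\) is incomplete in the residual case \(r \geq 2\) with \(p \mid n+r-1\): your plan to invoke \cref{cor_5}(ii) presupposes that \(\Gamma_p(G)\) is disconnected, but \cref{cor:1} cannot ``confirm'' this --- it equally permits a connected graph of diameter up to three, which can contain non-adjacent (coprime) vertex pairs such as \(n+r-1\) and \(n+r\). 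Even where disconnectedness can be forced by arithmetic (e.g.\ for \(\Sigma=\{n,n+1,n+2\}\) with \(n\) even), the contradiction you promise from the \(p\)-nilpotent quasi-Frobenius structure is deferred to a ``delicate count'' that is never carried out; that count is precisely the substance of the steps of \cite[Theorem C]{BF4}, so the crux of the theorem is assumed rather than proved.

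Two further steps would fail as written. In the \(r=1\) case, for the sub-case \(p \nmid n\), \(p \mid n+1\) you claim \(p\)-nilpotency ``from the quasi-Frobenius structure'', but \cref{cor_5}(ii) requires \(p\) to divide a vertex in the component \emph{not} containing the largest class size, whereas here the \(p\)-divisible vertex \(n+1\) \emph{is} the largest size; so none of the corollaries proved in the paper delivers this assertion, and it must come from the internal argument of \cite[Theorem C]{BF4}. And in the \(r=0\) case, the dichotomy between conclusions i) and ii) rests on the structure theorem for groups with a single non-central \(p\)-regular class size, which you neither prove nor correctly locate: you gesture at making \cite{BF2} unconditional via \cref{thmA}, while the paper instead imports \cite[Theorem A]{BF3}, which already holds without any \(p\)-solubility hypothesis. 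In short, your proposal reproduces the paper's scaffolding but leaves unproved the two decisive ingredients --- the elimination of \(r \geq 2\) and the \(p\)-nilpotency refinement in iii) --- so as it stands it is a plan for a proof rather than a proof.
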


\begin{proof}
    We follow the proof of \cite[Theorem C]{BF4}, but replacing \cite[Theorem 6]{BF4} by \cite[Theorem A]{BF3}, \cite[Corollary B]{BF4} by \cref{cor:2}, \cite[Lemma 7]{BF4} by \cite[Lemma 1]{C}, and again \cite[Theorem 5]{BF1} by \cref{cor_5}.
\end{proof}

We also include the following statement, which extends \cite[Theorem D]{BF4} to the general case. However, note that an elementary induction argument and Burnside's theorem yield that a group in which every $p$-regular conjugacy class of $G$ has prime power size is soluble. Thus our contribution is just to record the result for an arbitrary finite group. 

\begin{corollary}
    Let $G$ be a finite group and $p$ a prime. Every $p$-regular conjugacy class of $G$ has prime power size if and only if \(G\) is soluble and one of the following holds:
    \vspace*{-3mm}
    \begin{enumerate}[i)]
        \item $G$ has abelian $p$-complements. This occurs if and only if the size of every
        $p$-regular conjugacy class is a power of $p$.
        \item $G$ is nilpotent with abelian Sylow $r$-subgroups for all primes $r$ distinct from $p$ and from some prime $q \neq p$. This occurs if and only if the size of every $p$-regular class is a power of~$q$.
        \item $G = P \times H$, where $P \in \Syl_p(G)$ and $H$ is a $p$-complement
        of $G$. Furthermore, $H$ is quasi-Frobenius with abelian kernel and complements, and the set of conjugacy class sizes of $H$ is $\{1, q^s, r^t\}$ for positive integers $s$, $t$ and for some primes $q \neq r$, both distinct from $p$. This occurs if and only if the set of sizes of the $p$-regular conjugacy classes is exactly $\{1, q^s, r^t\}$.
    \end{enumerate}
\end{corollary}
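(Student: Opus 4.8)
The equivalence asserted here coincides, for $p$-soluble groups, with \cite[Theorem D]{BF4}; since solubility implies $p$-solubility, the whole statement will follow once I show that the hypothesis ``every $p$-regular conjugacy class of $G$ has prime-power size'' forces $G$ to be soluble. Indeed, the reverse implication --- that solubility together with one of (i)--(iii) yields the prime-power condition, along with the sub-equivalences recorded in each item --- is precisely \cite[Theorem D]{BF4} applied to the ($p$-soluble) group $G$, so I would simply invoke it there. Thus the only genuinely new ingredient is the derivation of solubility, and notably this corollary, unlike the earlier ones, does not require \cref{thmA}.

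To prove solubility I would argue by induction on $\abs{G}$, taking $G$ to be a counterexample of least order. The first step is to check that the hypothesis is inherited by proper quotients: if $N \norm G$ with $N \neq 1$ and $xN$ is $p$-regular in $G/N$, then the coset $xN$ contains a $p$-regular element of $G$ (writing a representative as the product of its commuting $p$- and $p'$-parts, the $p$-part is forced into $N$), and by \cref{CollectionLemma} the size $\abs{(xN)^{G/N}}$ divides $\abs{x^G}$, hence is a prime power. So $G/N$ again satisfies the hypothesis and, by minimality, is soluble. A standard argument then shows $G$ has a unique minimal normal subgroup $N$ (otherwise $G$ embeds in a direct product of two soluble quotients), and since $G/N$ is soluble while $G$ is not, $N$ must be non-abelian, say $N = T_1 \times \cdots \times T_k$ with each $T_i \cong T$ a non-abelian simple group.

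The contradiction comes from Burnside's classical theorem that a non-abelian finite simple group has no non-trivial conjugacy class of prime-power size. Concretely, I would choose a non-trivial $p$-regular element $t \in T$ (one exists, since a non-abelian simple group is not a $p$-group) and set $x = (t, 1, \ldots, 1) \in N \le G$. Then $x$ is a non-trivial $p$-regular element of $G$ with $\abs{x^N} = \abs{t^T}$, and $\abs{x^N}$ divides $\abs{x^G}$. By Burnside's theorem $\abs{t^T}$ is divisible by at least two distinct primes, so $\abs{x^G}$ is not a prime power, contradicting the hypothesis on $G$. Hence no counterexample exists and $G$ is soluble; being $p$-soluble, \cite[Theorem D]{BF4} then yields (i), (ii) or (iii) together with the corresponding sub-equivalences, completing the argument. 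Everything is elementary once Burnside's theorem is available, and the only point demanding care --- the main obstacle, such as it is --- is the reduction to quotients, namely verifying that $p$-regularity is genuinely preserved on passing to $G/N$, so that the class-size hypothesis transfers correctly down the induction.
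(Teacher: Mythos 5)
Your proposal is correct and follows essentially the same route as the paper, which likewise disposes of this corollary by noting that an elementary induction together with Burnside's theorem (no non-abelian simple group has a non-trivial conjugacy class of prime-power size) forces $G$ to be soluble, after which \cite[Theorem D]{BF4} applies verbatim to the ($p$-soluble) group $G$; you merely spell out the induction that the paper leaves as a one-line remark, including the key quotient-inheritance step via \cref{CollectionLemma}. The only nitpick is notational: in the quotient step you should write that $\abs{(xN)^{G/N}}$ divides the class size of the $p$-regular representative $x_{p'}$ (not of the original $x$), though your parenthetical makes clear this is what you intend.
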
 \qed

To close, as mentioned in \cite[Remark 2.2]{S}, the main result of that paper holds for an arbitrary group $G$ as a consequence of \cref{thmA}. 

\begin{corollary}
    Let $G$ be a finite group and $p$ be a prime. Then \(\Gamma_p(G)\) is a $k$-regular graph for some integer $k \geq 1$ if and only if it is a complete graph with $k+1$ vertices.
\end{corollary}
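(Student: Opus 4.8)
The reverse implication is immediate, since the complete graph $K_{k+1}$ is $k$-regular and $k \geq 1$ as it has at least two vertices. For the forward implication the plan is to follow the argument of \cite{S}: the sole point at which that argument invoked $p$-solubility was to guarantee that the product of two coprime $p$-regular conjugacy classes is again a single $p$-regular class, and this is now available for every finite group by \cref{thmA}. So suppose $\Gamma_p(G)$ is $k$-regular with $k \geq 1$; in particular it has no isolated vertex, and by \cref{cor:1} it is either connected of diameter at most three or a disjoint union of two complete graphs.

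The engine throughout is the following consequence of \cref{lem:fund lemma} and \cref{thmA}: if $a = \abs{x^G}$ and $b = \abs{y^G}$ are coprime (hence non-adjacent) $p$-regular class sizes, then $(xy)^G$ is a single $p$-regular class, so its size $c$ is a vertex (when $c > 1$) with $c \mid ab$ and $\pi(c) \subseteq \pi(a) \cup \pi(b)$. The crucial step is to show that $\pi(c)$ contains the whole of $\pi(a)$ or the whole of $\pi(b)$, and that it meets the other factor: in that case $c$, distinct from $a$ and $b$, is adjacent to $a$, to $b$, and to every neighbour of $a$, whence $\deg(c) \geq k+1$, contradicting regularity. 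In the disconnected case the same product, applied to classes lying in the two different components, is forced to land inside a single component; I would combine this with the structural description behind \cref{cor_5} (a group with disconnected $p$-regular class-size graph is quasi-Frobenius with abelian kernel and complements) to conclude that each component is a single vertex, so that the graph is $0$-regular—contradicting $k \geq 1$ and disposing of this case.

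The main obstacle is the remaining connected configuration in which the product systematically collapses, meaning $c \mid a$ (or $c \mid b$) while $\pi(c)$ carries none of the primes of the other factor, so that $c$ produces no vertex of degree exceeding $k$. Excluding these extremal configurations—and, in the disconnected analysis, pinning down exactly which class sizes can occur—is the delicate part, and it is handled in \cite{S} through the detailed Beltrán–Felipe structural classification, using \cref{cor:new cor} to control the classes coprime to a class of maximal size. Since that classification used $p$-solubility only through the product fact, \cref{thmA} now lets the whole argument run verbatim for an arbitrary finite group, giving that a $k$-regular $\Gamma_p(G)$ with $k \geq 1$ must be complete on $k+1$ vertices.
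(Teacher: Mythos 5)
Your proposal is correct and matches the paper's approach exactly: the paper gives no independent argument, simply noting (as in \cite[Remark 2.2]{S}) that the main result of \cite{S} used $p$-solubility only via the coprime-product lemma, which \cref{thmA} now supplies for all finite groups. Your additional reconstruction of the internal degree-counting and connected/disconnected analysis of \cite{S} is extra detail beyond what the paper records, but the essential proof — run the argument of \cite{S} verbatim with \cref{thmA} in place of the $p$-soluble product lemma — is the same.
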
 

As indicated in the Introduction, the above results can be regarded as generalisations of the corresponding results for \(\Gamma(G)\); in fact, it is enough to apply them with a prime $p$ not dividing the order of $G$ to recover the main results of \cite{A,BHM,BCHP,BCG,CHM}.%,  \cite{BHM},  \cite{BCHP}, \cite{BCG} and~\cite{CHM}.

\printbibliography

\end{document}